\def\sO{\mathcal O}
\def\sD{\mathcal D}
\def\sC{\mathcal C}
\newcommand{\aro}{\longrightarrow}
\newcommand{\bb}[1]{\mathbf{#1}}
\newcommand{\ot}{\otimes}
\newcommand\cg{\mathcal G}
\newcommand{\loc}{\rm loc}
\def\et{\text{\rm \'et}}
\numberwithin{equation}{section}
\newtheorem{thm}[equation]{Theorem}
\newtheorem{prop}[equation]{Proposition}
\newtheorem{lem}[equation]{Lemma}
\theoremstyle{definition}
\newtheorem{defn}[equation]{Definition}
\newtheorem{rmk}[equation]{Remark}
\date{30 November 2016}
\author[P. H. Hai]{Ph\`ung H\^o Hai } 
\address[PHH]{Institute of Mathematics, Vietnam Academy of Science and Technology, 18 Hoang Quoc Viet, Cau Giay, Hanoi, Vietnam}
\email{phung@math.ac.vn}
\thanks{The research of PHH is funded by Vietnam National Foundation for  Science and Technology Development (NAFOSTED). Part of this work has been carried out during his visit at the Max-Planck Institute for Mathematics, Bonn, he would like to thank the Institute for its hospitality and financial support.}
\author[J. P. dos Santos]{Jo\~ao Pedro P. dos Santos}
\address[JPdS]{Institut de Math\'ematiques de Jussieu -- Paris Rive Gauche, 4 place Jussieu, 
Case 247, 75252 Paris Cedex 5, France}
\email{joao-pedro.dos-santos@math.jussieu.fr}
\title[Action of $\pi^{\et}$ on $\pi^o$]{The action of the \'etale fundamental group scheme on the connected component of the essentially finite one}
\keywords{Fundamental group schemes, Tannakian duality, essentially finite vector bundles.}
\subjclass[2010]{14L15, 14L17, 14G17, 14F35}
\begin{document}
\begin{abstract}
We follow the pattern in \cite[Section 4]{Otabe} to define an action of the \'etale fundamental group \emph{scheme} $\pi^\et(X)$ on the local component of the essentially finite fundamental group scheme $\pi^{\mathrm{EF}}(X)$ of Nori. We show that the associated  representation is faithful when $X$ is a curve of genus $\geq 2$. 
\end{abstract} 
\maketitle  
 
\parskip2pt

\section{Introduction} 
Let $X$ be a connected, proper and  reduced algebraic scheme over a perfect field $k$, and $x$ a $k$-rational point of $X$. In his seminal work \cite{Nori76}, M. V. Nori detected that a full subcategory of the category of vector bundles on $X$ can be used to produce, via the Tannakian correspondence, an affine group scheme $\pi^{\rm EF}(X,x)$ over $k$ which, colloquially speaking, classifies torsors with finite structural group. 
If the characteristic of $k$ is positive, $\pi^{\rm EF}(X,x)$ possesses two relevant canonical  quotients: $\pi^{\et}(X,x)$, which is the largest pro-\'etale  one, and $\pi^{\rm loc}(X,x)$, which is  the largest local one. By considering the kernel of the morphism $\pi^{\rm EF}(X,x)\to\pi^{\et}(X,x)$, we then obtain another local affine group scheme, call it $\pi^{\rm EF}(X,x)^{o}$, and the question concerning the relation between $\pi^{\rm EF}(X,x)^o$ and $\pi^{\rm loc}(X,x)$ naturally arises.  

In \cite{EHS08}, the authors explained that $\pi^{\rm loc}(X,x)$ in fact only accounts for a   small portion of $\pi^{\rm EF}(X,x)^o$ by showing that the latter actually contains information about $\pi^{\rm loc}(X')$
 for \emph{all} ``geometric'' \'etale coverings $X'\to X$ 
 (see Theorem 3.5 of op. cit. for a precise statement). 
 Further, in \cite{EH10} it was noticed that $\pi^{\rm EF}(X,x)$ is a semi-direct product of $\pi^{\rm EF}(X,x)^o$ with $\pi^{\et}(X,x)$, and, when $X$ is a smooth projective curve, the action of  $\pi^{\et}(X,x)$ on $\pi^{\rm EF}(X,x)^o$ is trivial if and only if $X$ has genus at most 1 (see Corollary 2.3 and Propostion 2.4 of op. cit.).
 
The work \cite{EHS08} inspired Otabe \cite{Otabe} to show that, in case $k$ is of characteristic zero,  his  ``semi-finite fundamental group scheme'' $\pi^{\rm EN}(X,x)$ \cite[Section 2.4]{Otabe} produces a \emph{faithful} action of $\pi^{\et}(X,x)$ on its unipotent radical  provided $X$ is a smooth curve of genus at least two (see \cite[Theorem 4.12]{Otabe}). 

We wish to demonstrate here that Otabe's point of view can give interesting information in the case of positive characteristic. Our main finding is that the action of $\pi^{\et}(X,x)$ on $\pi(X,x)^o$ is faithful if $X$ is a geometrically connected, smooth and projective curve of genus at least two. 
See Section \ref{28.11.2016--3}, specially Theorem \ref{28.11.2016--2}. 

We now briefly describe the contents of this article. In Section \ref{26.07.2016--1} we review Nori's theory and some of its later developments. In Section \ref{12.08.2016--1} we slightly modify the presentation leading to Theorem 3.5 of \cite{EHS08} so that we can easily state and prove our main result, Theorem \ref{26.09.2016--1} of Section \ref{28.11.2016--3}. It is perhaps useful to note that Theorem \ref{26.09.2016--1} has a more portable consequence, which we present as Theorem \ref{28.11.2016--2}. 
The proof of Theorem \ref{26.09.2016--1} requires an exercise which is carried out on Section 
\ref{12.09.2016--1}.

\subsection*{Notations, conventions and generalities} 

\subsection{Conventions}\label{notations} 
 
\paragraph{{\it On vector bundles}} A vector bundle is a locally free coherent sheaf of finite rank. If $x:\mathrm{Spec}\,K\to X$ is a point of a scheme $X$ and $V$ is a vector bundle on $X$, we write $V|_x$ for the $K$-vector space $x^*V$. A vector bundle $V$ over $X$ is said to be trivial if it is isomorphic to some $\sO_X^{\oplus r}$. 

\paragraph{\emph{On group schemes}} For an affine group scheme $G$ over a field $k$, we write $k[G]$ instead of $\Gamma(G,\mathcal O_G)$.  Given an affine group scheme $G$, the category of all its \emph{finite dimensional} representations is denoted by $\mathrm{Rep}_k(G)$.  
An arrow $q:G\to H$  of affine group schemes is called a quotient morphism if it is faithfully flat. We use constantly the fact that $q:G\to H$ is a quotient morphism if and only if the associated arrow $k[H]\to k[G]$ is injective \cite[Chapter 14]{waterhouse}.  

\paragraph{\emph{On Abelian varieties}} For an abelian variety $A$, we let $[m]:A\to A$ stand for multiplication by $m$. The kernel of $[m]$ is denoted  by  $A[m]$. 

\subsection{Generalities on adjunctions in the category of affine group schemes}\label{16.09.2016--1}

Let $\bb G$ be the category of affine group schemes. In this section, we explain how to treat in more robust fashion the process of ``taking the largest quotient having a certain property''. 

We first note that
\begin{itemize}\item[($\star$)] $\bb G$ is stable under all small limits (use the standard criterion \cite[V.2, Corollary 2]{maclane}), 
\item[($\star\star$)]  and that each arrow $f:G\to H$ can be decomposed uniquely as 
\[
G\stackrel{q}{\aro} I \stackrel{i}{\aro} H,
\]
where $i$ is a closed embedding and $q$ is a quotient morphism. 
\end{itemize}

Let $u:\bb A\to \bb G$ be a full subcategory of $\bb G$ enjoying the ensuing properties:
\begin{enumerate}\item[P1.] The category $\bb A$ is small complete and $u$ preserves all small limits.
\item[P2.] If $A$ belongs to $\bb A$ and $i:H\to A$ is a closed embedding, then $H$ also belongs to $\bb A$. 
\item[P3.] If $A$ belongs to $\bb A$ and $q:A\to H$ is a quotient morphism, then $H$ also belongs to $\bb A$.
\end{enumerate}

Under such conditions, it is a direct consequence of Freyd's theorem \cite[V.6, Theorem 2]{maclane} that $u$ has a left adjoint $G\mapsto G^{\mathbf A}$. 
Furthermore: 
\begin{lem}\label{13.09.2016--1}The unit morphism $\eta_G:G\to u(G^{\bb A})$ is always a quotient morphisms while the co-unit $\varepsilon_A:(uA)^{\bb A}\to A$ is always an isomorphism. 
\end{lem}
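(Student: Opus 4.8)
The plan is to treat the two assertions by quite different means: the statement about the co-unit is a formal consequence of $u$ being a full subcategory inclusion, whereas the statement about the unit requires the canonical factorization ($\star\star$) together with property P2.

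I would begin with the co-unit $\varepsilon_A\colon (uA)^{\bb A}\to A$, which is the softer point. Since $u\colon\bb A\to\bb G$ is the inclusion of a full subcategory, it is fully faithful, and it is a general fact about an adjunction whose right adjoint is fully faithful that its co-unit is an isomorphism (see \cite{maclane}). Concretely, the inverse of $\varepsilon_A$ is produced by transporting $\mathrm{id}_{uA}$ across the adjunction bijection $\mathrm{Hom}_{\bb A}((uA)^{\bb A},A)\cong\mathrm{Hom}_{\bb G}(uA,uA)$; I would simply record this, as no special feature of $\bb G$ intervenes.

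The content lies in showing that $\eta_G\colon G\to u(G^{\bb A})$ is a quotient morphism. First I would invoke ($\star\star$) to factor $\eta_G=i\circ q$, where $q\colon G\to I$ is a quotient morphism and $i\colon I\to u(G^{\bb A})$ a closed embedding; it then suffices to prove that $i$ is an isomorphism. Because $u(G^{\bb A})$ lies in $\bb A$ and $i$ is a closed embedding, property P2 forces $I$ to belong to $\bb A$ as well, so I may write $I=u(B)$ with $B$ in $\bb A$ and, using the fullness of $u$, $i=u(j)$ for a unique $j\colon B\to G^{\bb A}$. Reading $q$ as a morphism $G\to u(B)$ and applying the universal property of the unit then yields a unique $\bar q\colon G^{\bb A}\to B$ in $\bb A$ with $u(\bar q)\circ\eta_G=q$.

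The decisive step is the short computation $u(j\circ\bar q)\circ\eta_G=u(j)\circ\big(u(\bar q)\circ\eta_G\big)=u(j)\circ q=i\circ q=\eta_G$, which, compared with $\eta_G=u(\mathrm{id}_{G^{\bb A}})\circ\eta_G$ and the uniqueness clause of the universal property, gives $j\circ\bar q=\mathrm{id}_{G^{\bb A}}$. Applying $u$ shows that $i=u(j)$ admits $u(\bar q)$ as a section, so $i$ is a split epimorphism; being at the same time a monomorphism (it is a closed embedding), it must be an isomorphism. Since composing a quotient morphism with an isomorphism again produces a quotient morphism, $\eta_G=i\circ q$ is one, as desired. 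I expect the only genuinely delicate point to be the bookkeeping that places the intermediate object $I$ inside $\bb A$: once P2 and fullness let the factorization live in $\bb A$, the universal property of $\eta_G$ can be brought to bear and the remainder is purely formal.
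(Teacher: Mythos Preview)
Your proof is correct and follows essentially the same approach as the paper: for the unit you factor $\eta_G$ via ($\star\star$), use P2 to place the intermediate object in $\bb A$, and then invoke the universal property to force $i$ to be an isomorphism (the paper phrases this more tersely as ``$q$ is universal from $G$ to $u$, so $i$ is an isomorphism''), while for the co-unit both you and the paper appeal to the general fact from \cite{maclane} that a fully faithful right adjoint has invertible co-unit. Your write-up simply unpacks the details the paper leaves implicit.
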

\begin{proof}Let $G\stackrel q\to I\stackrel i\to  u(G^{\bb A})$  be the decomposition of $\eta_G$ predicted by ($\star\star$). Then,  $I\in\bb A$ and it follows that $q:G\to I$ is universal from $G$ to $u$, so that $i$ is an isomorphism. 
The second claim follows immediately from \cite[V.3, Theorem 1]{maclane}.  
\end{proof}

This justifies the following standard terminology: 
\begin{defn}\label{23.09.2016--3}If $G$ is an affine group scheme, then the arrow $G\to G^{\bb  A}$ is called the largest quotient of $G$ in $\bb A$. 
\end{defn}

Let now $v:\bb B\to \bb G$ be a second 
subcategory enjoying P1--P3. From Lemma \ref{13.09.2016--1} and stability under quotients, we conclude that $B^{\bb A}\in \bb B$ for all $B\in\bb B$; one easily sees that $(-)^{\bb A}:\bb B\to\bb A\cap\bb B$ is left adjoint to the inclusion $\bb A\cap\bb B\to \bb B$.  
This being so, the composition 
\[
\bb G\stackrel{(-)^{\bb B}}\aro \bb B\stackrel{(-)^{\bb A}}\aro \bb A\cap\bb B
\]
is adjoint to the inclusion $\bb A\cap\bb B\to\bb G$ since ``left adjoint of a composition is the composition of the left adjoints'' \cite[V.8, Theorem 1]{maclane}. 
Consequently, employing \cite[V.1, Corollary 1]{maclane} we have 
\begin{lem}Let $\bb A$ and $\bb B$ be categories as above. Then, the compositions   
\[\xymatrix{
\bb G\ar[r]^{(-)^{\bb A}}& \bb A \ar[r]^-{(-)^{\bb B}}& \bb A\cap \bb B
}\quad\text{and}\quad\xymatrix{
\bb G\ar[r]^{(-)^{\bb B}}& \bb B \ar[r]^-{(-)^{\bb A}}& \bb A\cap \bb B
}\]
are naturally isomorphic. Moreover, they are also naturally isomorphic to 
\[
\xymatrix{\bb G\ar[rr]^{(-)^{\bb A\cap\bb B}}&& \bb A\cap\bb B} 
\]
\qed 
\end{lem}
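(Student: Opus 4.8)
The plan is to reduce the whole statement to the uniqueness of adjoints, so that essentially no computation is needed. The first thing I would do is check that the intersection $\bb A\cap\bb B$, viewed as a full subcategory of $\bb G$, again enjoys P1--P3. For P1, given a small diagram in $\bb A\cap\bb B$ its limit formed in $\bb G$ lies in $\bb A$ and in $\bb B$ by P1 applied to each of them separately, hence in $\bb A\cap\bb B$; and the inclusion into $\bb G$ preserves this limit because the inclusions of $\bb A$ and of $\bb B$ do. Properties P2 and P3 are tested against closed embeddings and quotient morphisms computed in $\bb G$, so they pass to the intersection verbatim. With P1--P3 in hand, the general construction of Section \ref{16.09.2016--1} applies to $\bb A\cap\bb B$ and produces a functor $(-)^{\bb A\cap\bb B}$ that is left adjoint to the inclusion $\bb A\cap\bb B\aro\bb G$.

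Next I would invoke what the discussion preceding the statement has already established, namely that the composite $\bb G\stackrel{(-)^{\bb B}}\aro\bb B\stackrel{(-)^{\bb A}}\aro\bb A\cap\bb B$ is left adjoint to the inclusion $\bb A\cap\bb B\aro\bb G$; this came from writing the inclusion as $\bb A\cap\bb B\aro\bb B\aro\bb G$ and using that the left adjoint of a composition is the composition of the left adjoints. Since hypotheses P1--P3 are symmetric in $\bb A$ and $\bb B$, I would then simply interchange the two categories and run the identical argument, obtaining that the other composite $\bb G\stackrel{(-)^{\bb A}}\aro\bb A\stackrel{(-)^{\bb B}}\aro\bb A\cap\bb B$ is likewise left adjoint to the inclusion $\bb A\cap\bb B\aro\bb G$.

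At this stage all three functors in the statement are left adjoints of one and the same functor, the inclusion $\bb A\cap\bb B\aro\bb G$, so the uniqueness of adjoints up to natural isomorphism \cite[V.1, Corollary 1]{maclane} delivers both required natural isomorphisms simultaneously, which is exactly the assertion. I do not expect a real obstacle: the argument is purely formal. The only point that deserves a moment's care is that the intermediate adjunction used above is legitimate, i.e. that $(-)^{\bb A}$ genuinely carries $\bb B$ into $\bb A\cap\bb B$; but this is the consequence of Lemma \ref{13.09.2016--1} together with stability under quotients recorded just before the statement, namely that $B^{\bb A}\in\bb B$ for every $B\in\bb B$, so it may be taken for granted.
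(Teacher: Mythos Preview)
Your argument is correct and follows the same route as the paper: both reduce to showing that all three functors are left adjoints of the inclusion $\bb A\cap\bb B\to\bb G$ and then invoke uniqueness of adjoints \cite[V.1, Corollary~1]{maclane}. You make explicit the verification that $\bb A\cap\bb B$ again satisfies P1--P3 (so that $(-)^{\bb A\cap\bb B}$ is defined), a point the paper leaves tacit; otherwise the proofs coincide.
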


As is customary, if $\bb  A$ is the category of abelian affine group schemes, respectively local affine group schemes,  then $G^{\bb A}$ is denoted by $G^{\rm ab}$, respectively $G^{\rm loc}$. They are then, in the spirit of Definition \ref{23.09.2016--3} above, called the largest abelian quotient, respectively the largest local quotient, of $G$.

\section{The essentially finite fundamental group scheme}\label{26.07.2016--1}
In this section, we make a leisurely introduction to the essentially finite group scheme; it serves mainly to help us establish notation and to introduce the reader to our mode of thought. Besides the seminal text \cite{Nori76}, the reader should consult \cite{EHS08} for detailled information. 

In what follows, $k$ stands for a perfect field of characteristic $p>0$. 
Let $X$ be a proper, reduced and connected algebraic scheme over $k$. 
In \cite{Nori76}, Nori introduced two important classes of vector bundles: the (now called) Nori-semistables and the finite. A vector bundle $V$ on $X$ is said ot be Nori-semistable if it becomes semistable and of degree zero when pulled back along any non-constant morphism $\gamma:C\to X$ from a smooth and projective curve (see the Definition after Proposition 3.4 in \cite{Nori76}). 
The second class, the finite vector bundles, are those $V$ for which the set 
\[
 \left\{\begin{array}{c}\text{isomorphism classes of indecomposable} \\ \text{direct summands of $V^{\otimes 1},V^{\otimes2},\ldots$}
\end{array}\right\}
\]
is \emph{finite} (see the Definition after Lemma 3.1 in \cite{Nori76}).
It turns out that all finite vector bundles are Nori-semistable and that the category of Nori-semistables -- any morphism of vector bundle being an arrow -- is abelian. This fact allows one to consider all the Nori-semistables of the form $W/W'$, where $W'\subset W$ are both subobjects of a common finite $V$, and show that the resulting category, 
with the evident tensor product, is a tensor category over $k$ in the sense of \cite[1.2]{deligne90}. This is the category of \emph{essentially finite} vector bundles, which is denoted in what follows by $\sC^{\rm EF}(X)$. 

Given a $k$-point $x$ of $X$, the functor $V\mapsto V|_x$ (see section \ref{notations}) from $\sC^{\rm EF}(X)$ to $k\textbf{-vect}$
is exact and faithful, so that the main result of Tannakian theory \cite[2.11, p.130]{dm} constructs an affine group scheme over $k$, usually called the Nori or \emph{essentially finite fundamental group scheme} $\pi(X,x)$, and an equivalence of tensor categories 
\[
\sC^{\rm EF}(X)\stackrel{\sim}{\longrightarrow} \mathrm{Rep}_k(\pi(X,x)),\quad V\longmapsto V|_x.
\]

Let us now elaborate on an useful notion. Given $V\in\sC^{\rm EF}(X)$, let   $\langle V\rangle_\otimes$ stand  for the full subcategory of $\sC^{\rm EF}(X)$ whose objects are subquotients of finite direct sums of vector bundles of the form $V^{\otimes a}\otimes V^{*\otimes b}$. Then, 
\[
\bullet|_x:\langle V\rangle_\otimes\longrightarrow \mathrm{Rep}_k(\pi(X,x))
\]
defines an equivalence between $\langle V\rangle_\otimes$ and the category $\mathrm{Rep}_k(\pi(X,V,x))$ of a certain quotient  $\pi(X,V,x)$ of $\pi(X,x)$ \cite[2.21,p.139]{dm}. This quotient turns out to be a \emph{finite} group scheme, a fact which can be grasped by looking at the definition of a finite vector bundle and \cite[2.20(a), p.138]{dm}.

The full subcategory of $\sC^{\rm EF}(X)$ consisting of those $V$ for which $\pi(X,V,x)$ is \'etale, respectively local, will be denoted by $\sC^{\et}(X)$, respectively $\sC^{\rm loc}(X)$. Accordingly, objects of $\sC^{\et}(X)$, respectively of $\sC^{\loc}(X)$, are called \'etale, respectively local, vector bundles. 
By means of the criterion \cite[Proposition 2.21, p.139]{dm} and the fact that \'etale and local finite group schemes are stable under quotient morphism,  the functor $\bullet|_x$ induces an equivalence between $\sC^{\et}(X)$, respectively $\sC^{\rm loc}(X)$, and a \emph{quotient} $\pi^{\et}(X,x)$, respectively $\pi^{\rm loc}(X,x)$, of $\pi(X,x)$. Needless to say, the affine group scheme $\pi^{\loc}(X,x)$, respectively $\pi^{\et}(X,x)$, is a projective limit of finite and local group schemes,  respectively finite and \'etale group schemes. 

The relation between  $\pi^{\et}(X,x)$ and its celebrated predecessor, the \'etale fundamental group of \cite{SGA1} is quite simple: Let $\overline k$ be an algebraic closure of $k$, and write $\overline X=X\otimes_k\overline k$. 
Then, using the obvious geometric point $\overline x:\mathrm{Spec}\,\overline k\to\overline X$, we construct the geometric fundamental group $\pi_1(\overline X,\overline x)$ of $\overline X$. Since $\overline x$ actually comes from a $k$-rational point, $\pi_1(\overline X,\overline x)$ has a continuous action of $\mathrm{Gal}(\overline k/k)$, and by the construction of \cite[II, \S5, no. 1.7]{DG} we can associate to $\pi_1(\overline X,\overline x)$ a profinite group scheme. This is  $\pi^{\et}(X,x)$. As we shall have no use for this characterization here, we omit the verifications.
(Note that this relation is incorrectly stated in \cite[2.34]{dm} and partially explained in \cite[Remarks 2.10]{EHS08}.) 

We end this section with a result which is left implicit in most works on the subject.

\begin{lem}\label{27.11.2016--1}Let $E$ be a vector bundle over $X$, and $K$ be a finite and separable extension of $k$.  Then $E$ is essentially finite if and only if $E\ot K$ is essentially finite over $X\ot K$. Moreover, the same statement is true if we replace ``essentially finite'' by ``local'' or ``\'etale''. 
\end{lem}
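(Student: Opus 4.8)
The plan is to reduce everything to a descent statement along the finite separable (hence étale and faithfully flat) extension $K/k$, and to exploit the fact that essential finiteness, locality and étaleness are all detected on the finite quotients $\pi(X,V,x)$, which behave well under the base change $X\mapsto X\ot K$. First I would dispose of the elementary direction. If $E$ is essentially finite, then $E\ot K$ is obtained from $E$ by the base-change functor $\sC^{\rm EF}(X)\to \sC^{\rm EF}(X\ot K)$, which is tensor-exact and carries finite bundles to finite bundles (a finite bundle stays finite because the finite set of indecomposable summands of the tensor powers can only get smaller or split further after a flat base change, and there are finitely many summands to begin with). Thus the ``only if'' direction is formal.

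For the converse I would argue by Tannakian/representation-theoretic descent. Since $K/k$ is finite separable, $X\ot K\to X$ is finite étale of degree $d=[K:k]$, and I can use the adjunction between pullback $f^*$ and pushforward $f_*$ along $f\colon X\ot K\to X$. The key observation is that if $E\ot K=f^*E$ is essentially finite over $X\ot K$, then $f_*f^*E$ is essentially finite over $X$ (pushforward along a finite étale map preserves $\sC^{\rm EF}$, which one checks on finite bundles via the Galois-type argument, or by noting that $f_*$ corresponds on the Tannakian side to induction of representations along the open immersion of fundamental group schemes $\pi(X\ot K)\to\pi(X)$). Then the projection formula or the unit/counit of the adjunction exhibits $E$ as a direct summand of $f_*f^*E$: concretely, $E\otimes f_*\sO_{X\ot K}\cong f_*f^*E$, and since $f_*\sO_{X\ot K}$ contains $\sO_X$ as a direct summand (the trace form, $K/k$ being separable, splits off the unit), $E$ is a direct summand of an essentially finite bundle, hence essentially finite because $\sC^{\rm EF}(X)$ is closed under direct summands (being abelian).

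The statements for ``local'' and ``étale'' I would then derive by the same mechanism, tracking which finite quotient $\pi(X,V,x)$ one lands in. Here the crucial compatibility is that forming $\pi(X,V,x)$ commutes with the separable base change, i.e.\ $\pi(X\ot K, E\ot K, x_K)\cong \pi(X,E,x)\ot_k K$ as finite group schemes over $K$; this is where one uses that $K/k$ is separable, since a finite group scheme over $k$ is étale (resp.\ local) if and only if its base change to $K$ is étale (resp.\ local), étaleness and connectedness of the identity component being insensitive to separable field extensions. Combining this with the descent of $E$ from $E\ot K$ already established, locality of $\pi(X,E,x)\ot K$ forces locality of $\pi(X,E,x)$, and likewise for étaleness.

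The main obstacle I anticipate is making the pushforward step fully rigorous: one must verify that $f_*$ genuinely sends essentially finite bundles to essentially finite bundles, which is cleanest to phrase Tannakianly as induction along $\pi(X\ot K,x_K)\to\pi(X,x)$, but this requires knowing that the latter is a closed immersion with finite étale cokernel-torsor, i.e.\ that the base-change functor on $\sC^{\rm EF}$ corresponds to restriction along an open subgroup. Establishing this comparison of fundamental group schemes under separable base change — equivalently, that $\pi(X\ot K,x_K)$ is an open subgroup of $\pi(X,x)\ot K$ of index $d$ — is the technical heart, and everything else is formal once it is in place.
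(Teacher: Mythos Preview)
Your overall strategy for the ``if'' direction --- embed $E$ as a direct summand of $f_*f^*E$ and show the latter is essentially finite --- is sound, and the local/\'etale deduction via $\pi(X\ot K,E\ot K,x_K)\cong\pi(X,E,x)\ot_kK$ is exactly right. But the paragraph you flag as the ``technical heart'' contains a genuine confusion that, as written, breaks the argument.

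You treat $f:X\ot K\to X$ as if it were an \'etale cover of $k$-schemes of the type $X'\to X$ considered elsewhere in the paper, and accordingly propose that $\pi(X\ot K,x_K)$ sits inside $\pi(X,x)\ot K$ as an \emph{open subgroup of index $d$}, with $f_*$ corresponding to induction along that inclusion. This is the wrong picture: $\pi(X\ot K,x_K)$ is a group scheme over $K$, not over $k$, and Nori's base-change theorem \cite[Chapter II, Proposition 5]{Nori82} says precisely that the natural map $\pi(X,x)\ot_kK\to\pi(X\ot K,x_K)$ is an \emph{isomorphism}. (You even state this correctly one paragraph earlier, at the level of the finite quotients $\pi(X,E,x)$.) So there is no index-$d$ open subgroup, and the ``induction'' you invoke does not exist in the form you describe. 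What $f_*$ actually corresponds to, Tannakianly, is restriction of scalars from $K$ to $k$ on representations; once you use the correct isomorphism this does give $f_*(E\ot K)\in\sC^{\rm EF}(X)$, and then your direct-summand argument finishes the proof.

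The paper's proof reaches the same destination by a shorter road that avoids the pushforward step entirely. It uses that $E\ot K$ essentially finite means there is a finite torsor $P\to X\ot K$ and a monomorphism $E\ot K\hookrightarrow\sO_P^{\oplus r}$; Nori's result \cite[Chapter II, Proposition 5]{Nori82} then says $P$ may be taken of the form $P_0\ot K$ for a finite torsor $P_0\to X$, whence $E\hookrightarrow\sO_{P_0}^{\oplus r}\ot_kK$ as $\sO_X$-modules. Since $E$ is visibly Nori-semistable (this property descends along the faithfully flat $f$), this embedding exhibits $E$ as essentially finite. The same argument, choosing $G$ local or \'etale, handles the last sentence. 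Both approaches ultimately rest on the same input from \cite{Nori82}; the paper's is more direct because it never needs to analyse $f_*$ categorically.
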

\begin{proof}Only the ``if'' statement needs attention, so assume that $E\ot K$ is essentially finite. 
We can therefore find a finite group scheme $G$ (over $K$), a $G$-torsor $P\to X\ot K$, and a monomorphism $E\ot K\to \mathcal O_{P}^{\oplus r}$. Now, according to 
\cite[Chapter II, Propsoition 5, p.89]{Nori82}, $P$ can be chosen to come from $X$, that is,  $P=P_0\ot K$, where $P_0\to X$ is a torsor under a certain finite group scheme. Consequently, we obtain a monomorphism of $\mathcal O_X$-modules $E\to \mathcal O_{P_0}^{\oplus r}\ot K$; as $E$ is certainly Nori-semistable, we conclude that  $E$ is essentially finite. 
The proof of the last claim follows the same method, since we can replace $G$ with a local, or \'etale finite group scheme.  
\end{proof}

\section{The kernel of $\pi(X)\to\pi^{\et}(X)$}\label{12.08.2016--1} We maintain the notations and terminology of section \ref{26.07.2016--1}, but omit reference to the base point $x$  in speaking about fundamental group schemes. 
In what follows we briefly review some results of \cite{EHS08}, including one of its main outputs, Theorem 3.5 on p. 389. In fact, we shall,  with an eye to future applications, use a different path to arrive at \cite[Theorem 3.5]{EHS08}; see the discussion after  Definition \ref{12.09.2016--2} below.

We begin with generalities and remind the reader that we ignore in notation the dependence of the chosen base point $x$. Given  $V\in\sC^{\rm EF}(X)$, \emph{an inverse} of the equivalence 
\[
\bullet|_x:\langle V\rangle_\otimes\aro \mathrm{Rep}_k(\pi(X,V))
\] 
constructed on Section \ref{26.07.2016--1} produces a principal $\pi(X,V)$-bundle 
\[
\psi_V:  X_V\longrightarrow X
\]
together with a $k$-point $x_V$ on the fibre of $\psi_V$ above $x$. Moreover, our inverse equivalence is just the contracted product functor 
\begin{equation}\label{contracted_product}
\mathscr L_{X_V}: \mathrm{Rep}_k(\pi(X,V)
)\aro
\langle V\rangle_\otimes.
\end{equation}
(See \cite[I.4.4.2]{saavedra} for the existence an inverse to $\bullet|_x$ which is a tensor functor and  \cite[11ff]{Nori76} for the construction of $X_V$.)

Let us fix $V\in\sC^{\et}(X)$ and simplify notations by writing 
\[ \boxed{
X':=X_V,\quad \psi=\psi_V,\quad G=\pi(X,V),\quad x'=x_V. }
\]
Two simple features of $X'$ are immediately remarked: $X'$ is reduced and proper ($\psi$ is finite and  \'etale), and  $X'$ is ``Nori''-reduced, that is,  
$\Gamma(X',\mathcal O_{X'})=k$, see \cite[Proposition 3, p. 87]{Nori82}. We are then allowed to consider $\sC^{\rm loc }(X')$, and set out to investigate its  relation to $\sC^{\rm EF}(X)$.  
Using the proof of Theorem 2.9 in \cite{EHS08} (see also the paragraph preceding Lemma 2.8 on p. 384), we can say the following. 

\begin{thm}For each $E'\in \sC^{\rm EF}(X')$, the vector bundle $\psi_*(E')$ is also essentially finite on $X$. \qed
\end{thm}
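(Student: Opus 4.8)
The plan is to prove the stronger assertion that $\psi_*F'$ is \emph{finite} whenever $F'$ is finite on $X'$, and then to descend from the finite to the essentially finite case using the exactness of $\psi_*$. As a preliminary reduction I would arrange that the finite \'etale group scheme $G=\pi(X,V)$ is \emph{constant}. Since $k$ is perfect there is a finite separable extension $K/k$ with $G\ot K$ constant; because flat base change gives $(\psi_*E')\ot K\cong\psi_{K*}(E'\ot K)$, Lemma \ref{27.11.2016--1} lets me replace $k$ by $K$ and thus assume $G=\underline\Gamma$ for a finite (abstract) group $\Gamma$. Then $\psi\colon X'\aro X$ is a connected $\Gamma$-Galois \'etale cover, each $g\in\Gamma$ acts as a $k$-automorphism $a_g$ of $X'$ over $X$, and flat base change along the Cartesian square presenting $X'\times_XX'\cong\coprod_{g\in\Gamma}X'$ yields the standard identification
\[
\psi^*\psi_*\mathcal F\;\cong\;\bigoplus_{g\in\Gamma}a_g^*\mathcal F
\]
for every vector bundle $\mathcal F$ on $X'$.

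For $F'$ finite I would then bound the indecomposable summands of all tensor powers of $\psi_*F'$. As the $a_g$ are automorphisms, each $a_g^*F'$ is finite, hence so is $\bigoplus_{g}a_g^*F'$, finite bundles being closed under $\ot$ and $\oplus$ \cite{Nori76}; let $\mathcal S$ be the finite set of indecomposable summands occurring in the tensor powers of $\bigoplus_{g}a_g^*F'$. Using the projection formula in the form $\psi_*A\ot\psi_*B\cong\psi_*(A\ot\psi^*\psi_*B)=\psi_*\big(\bigoplus_{g}A\ot a_g^*B\big)$ together with induction on $n$, one checks that $(\psi_*F')^{\ot n}\cong\psi_*B_n$, where each $B_n$ is a direct sum of tensor products of the bundles $a_g^*F'$; in particular every indecomposable summand of $B_n$ lies in $\mathcal S$. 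Consequently each indecomposable summand of $(\psi_*F')^{\ot n}$ appears among the indecomposable summands of the \emph{finite} family $\{\psi_*S:S\in\mathcal S\}$, so that $\psi_*F'$ is finite, and a fortiori essentially finite.

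To treat a general $E'\in\sC^{\rm EF}(X')$ I would write $E'=W_1/W_2$ with $W_2\subset W_1$ Nori-semistable subobjects of a finite bundle $F'$. Since $\psi$ is finite, hence affine and flat, $\psi_*$ is exact and $\psi_*E'=\psi_*W_1/\psi_*W_2$ with $\psi_*W_2\subset\psi_*W_1\subset\psi_*F'$; as $\psi_*F'$ is finite, it remains to see that $\psi_*W_1$, $\psi_*W_2$ and the relevant quotients are Nori-semistable, for then $\psi_*E'$ is exhibited as a subquotient of a finite bundle inside the category of Nori-semistables, i.e. as an essentially finite bundle. The needed preservation of Nori-semistability follows by base change along an arbitrary non-constant $\gamma\colon C\aro X$ from a smooth projective curve: setting $C'=C\times_XX'$, flat base change gives $\gamma^*\psi_*(-)\cong\psi_{C*}\gamma'^*(-)$ with $\psi_C\colon C'\aro C$ finite \'etale and $\Gamma$-Galois, so $\gamma'^*(-)$ is semistable of degree $0$ on each component of $C'$, and its \'etale pushforward stays semistable of degree $0$ (the degree because $\psi_C$ is unramified, and semistability because $\psi_C^*\psi_{C*}(-)\cong\bigoplus_{g}g^*(-)$ is again semistable of degree $0$). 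This is precisely the mechanism underlying the proof of Theorem 2.9 in \cite{EHS08}.

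I expect the crux to be the finiteness of $\psi_*F'$ for finite $F'$: the projection-formula bookkeeping that confines every indecomposable summand of the tensor powers to the fixed finite set $\mathcal S$ is what makes the argument close, and it is exactly there that the reduction to a constant $\Gamma$ and the closure of the class of finite bundles under $\ot$ and $\oplus$ are used. The preservation of Nori-semistability is, by comparison, a routine base-change verification, and the reduction to constant $\Gamma$ is immediate from Lemma \ref{27.11.2016--1}.
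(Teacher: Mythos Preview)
Your argument is correct and is essentially the same as the one the paper invokes: the paper does not give its own proof but simply appeals to the proof of Theorem~2.9 in \cite{EHS08}, and your proposal is a faithful reconstruction of that argument (reduction to constant $G$ via Lemma~\ref{27.11.2016--1}, finiteness of $\psi_*F'$ through the projection formula and $\psi^*\psi_*\cong\bigoplus_g a_g^*$, and the passage to essentially finite bundles via exactness of $\psi_*$ together with preservation of Nori-semistability under finite \'etale pushforward).
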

Hence, we obtain a functor  
\[
\psi_*:\sC^{\rm loc}(X')\longrightarrow \sC^{\rm EF}(X) 
\]
which, it turns out, allow us to understand the category of representations of the kernel 
\[\mathrm{Ker}\,\pi(X,x)\longrightarrow \pi^\et(X,x).\]
Until now, this is exactly the point of view in \cite{EHS08}; let us start making minor changes.  

\begin{defn}\label{12.09.2016--2}Given any finite set  $S$ of objects in $\sC^{\rm EF}(X)$, we let $\langle S\rangle_\otimes$ stand for the full subcategory \[\langle \oplus_{W\in S}W\rangle_\otimes\]
of $\sC^{\rm EF}(X)$. If $S$ is an arbitrary set of objects in $\sC^{\rm EF}(X)$, we let  $\langle S\rangle_\otimes$ stand for the full subcategory having  
\[
\bigcup_{\text{$s\subset S$ finite}} \left\langle s  \right\rangle_\otimes
\]
as  objects.  
\end{defn}

We now  apply the above definition to the set of objects of
$\psi_*\sC^{\rm loc}(X')$. Let  
\[\pi(X,\sC^{\rm loc}(X'))\]
stand for the \emph{quotient} of $\pi(X)$ obtained by means of the category 
\[
\langle\psi_{*}\sC^{\rm loc}(X')\rangle_\otimes
\]
and the fibre functor $\bullet|_x$ through the basic result \cite[2.21, p.139]{dm}.   

\begin{prop}\label{10.09.2016--1}The following claims are true. 
\begin{enumerate}
\item 
 A vector bundle $E\in\sC^{\rm EF}(X)$ belongs to $\langle\psi_*\sC^{\rm loc}(X')\rangle_\ot$ if  and only if  $\psi^*E$ belongs to $\sC^{\loc}(X')$.

\item The vector bundle $V$ belongs to $\langle\psi_{*}\sC^{\rm loc}(X')\rangle_\otimes$ and the resulting morphism 
\[
\pi(X\,;\,\sC^{\rm loc}(X'))\aro \pi(X,V)=G
\]
is a quotient morphism. 

\item Each $E\in\sC^{\rm loc}(X)$ belongs to $\langle\psi_{*}\sC^{\rm loc}(X')\rangle_\otimes$ and the resulting morphism 
\[
\pi(X\,;\,\sC^{\rm loc}(X'))\aro \pi^{\rm loc}(X) 
\]
is a quotient morphism. In particular, $\pi^{\loc}(X)$ is the largest local quotient of $\pi(X\,;\,\sC^{\rm loc}(X'))$. 
 
\end{enumerate}
\end{prop}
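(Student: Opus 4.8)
The plan is to deduce all three parts from part (1), which compares membership in $\langle\psi_*\sC^{\rm loc}(X')\rangle_\otimes$ with the local nature of the pullback. First I would record that $\psi^*\colon\sC^{\rm EF}(X)\to\sC^{\rm EF}(X')$ is an exact tensor functor (legitimate because $\psi$ is finite, flat and surjective, so pullback preserves Nori-semistability and degree zero and sends finite bundles to finite bundles), and that $\sC^{\rm loc}(X')$ is a Tannakian subcategory of $\sC^{\rm EF}(X')$, stable under subquotients, $\otimes$, duals and finite direct sums. Consequently the full subcategory $\{E\in\sC^{\rm EF}(X):\psi^*E\in\sC^{\rm loc}(X')\}$ is itself a Tannakian subcategory of $\sC^{\rm EF}(X)$. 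For the forward implication of (1) it then suffices to check that this subcategory contains every generator $\psi_*F'$ with $F'\in\sC^{\rm loc}(X')$, i.e.\ that $\psi^*\psi_*F'$ is local; granting this, the subcategory contains $\langle\psi_*\sC^{\rm loc}(X')\rangle_\otimes$ and the implication follows. For the reverse implication, if $\psi^*E$ is local then $\psi_*\psi^*E$ lies in $\langle\psi_*\sC^{\rm loc}(X')\rangle_\otimes$; since $\psi$ is faithfully flat the unit $E\to\psi_*\psi^*E$ is injective, hence a monomorphism in the abelian category $\sC^{\rm EF}(X)$, exhibiting $E$ as a subobject of an object of $\langle\psi_*\sC^{\rm loc}(X')\rangle_\otimes$, which is closed under subobjects.

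The crux — and the step I expect to be the main obstacle — is the computation that $\psi^*\psi_*F'$ is local for $F'\in\sC^{\rm loc}(X')$. Here I would use that $\psi$ is a $G$-torsor, so that base change along $\psi$ trivialises it: $X'\times_XX'\cong G\times X'$. Flat base change through the Cartesian square then identifies $\psi^*\psi_*F'$ with the pushforward along the first projection of the pullback along the second, that is, with a ``sum of translates'' of $F'$ by the elements of $G$. To make this literal I would pass to a finite separable (Galois) extension $k'/k$ splitting the \'etale group scheme $G$; over $k'$ the group $G_{k'}$ is constant, $X'_{k'}\times_{X_{k'}}X'_{k'}=\bigsqcup_g X'_{k'}$, and $\psi^*\psi_*F'$ becomes $\bigoplus_g t_g^*(F'_{k'})$, where $t_g$ is translation by $g$ on the torsor. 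Each $t_g$ is an isomorphism over $X_{k'}$, so $t_g^*(F'_{k'})$ is again local, and a finite direct sum of local bundles is local; thus $\psi^*\psi_*F'$ is local after the extension $k'$, and Lemma \ref{27.11.2016--1} descends this back to $k$. This self-trivialisation of the torsor, together with the stability of $\sC^{\rm loc}$ under the torsor automorphisms, is the technical heart of the proposition.

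With part (1) in hand, parts (2) and (3) reduce to identifying the relevant pullbacks and translating category inclusions into quotient morphisms. For (2), since $X'=X_V$ is precisely the $G$-torsor trivialising $\langle V\rangle_\otimes$, the bundle $\psi^*V$ is trivial, hence local, so $V\in\langle\psi_*\sC^{\rm loc}(X')\rangle_\otimes$ by (1); the inclusion $\langle V\rangle_\otimes\subseteq\langle\psi_*\sC^{\rm loc}(X')\rangle_\otimes$ of Tannakian subcategories of $\sC^{\rm EF}(X)$ then yields, via the criterion \cite[Chapter 14]{waterhouse} (equivalently \cite[2.21]{dm}), the quotient morphism $\pi(X\,;\,\sC^{\rm loc}(X'))\to\pi(X,V)=G$. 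For the first assertion of (3) I must check that $\psi^*E$ is local on $X'$ whenever $E$ is local on $X$: the trivialising torsor $X_E\to X$ is a torsor under the local group scheme $\pi(X,E)$, and its pullback $X_E\times_XX'\to X'$ is a torsor under the \emph{same} group trivialising $\psi^*E$, so $\pi(X',\psi^*E)$ is a subquotient of $\pi(X,E)$ and therefore again local. Applying (1) gives $\sC^{\rm loc}(X)\subseteq\langle\psi_*\sC^{\rm loc}(X')\rangle_\otimes$, and the same subcategory-to-quotient dictionary provides the quotient morphism $\pi(X\,;\,\sC^{\rm loc}(X'))\to\pi^{\rm loc}(X)$.

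Finally, for the last assertion, the local objects of $\langle\psi_*\sC^{\rm loc}(X')\rangle_\otimes$ are by definition those $E$ for which $\pi(X,E)$ is local, i.e.\ exactly the objects of $\sC^{\rm loc}(X)$; since the first part of (3) shows these are all present, the Tannakian subcategory cut out by the largest local quotient of $\pi(X\,;\,\sC^{\rm loc}(X'))$ is precisely $\sC^{\rm loc}(X)$, whose Tannaka dual is $\pi^{\rm loc}(X)$. This identifies $\pi^{\rm loc}(X)$ as the largest local quotient, in agreement with the adjunction formalism of Lemma \ref{13.09.2016--1}.
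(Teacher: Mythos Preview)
Your proof is correct and follows essentially the same route as the paper. The key computation for (1)---trivialising the torsor along itself via the Cartesian square $X'\times_X X'\cong X'\times G$, base-changing to split $G$, recognising $\psi^*\psi_*F'$ as a sum of translates $\bigoplus_g t_g^*F'$, and descending via Lemma~\ref{27.11.2016--1}---is exactly what the paper does; your reverse implication via the unit $E\hookrightarrow\psi_*\psi^*E$ is likewise identical. For (2) and (3) the paper is terser (it simply invokes (1) and \cite[2.21]{dm}), whereas you spell out explicitly why $\psi^*E$ is local when $E$ is, via the pulled-back torsor; this is a harmless elaboration rather than a different method.
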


\begin{proof} 
(1) The proof goes as that of  \cite[Lemma 2.8, p.384]{EHS08}. 
 Let  $E=\psi_*(E')$, where $E'$ is a local vector bundle. Using the cartesian square 
\[
\xymatrix{X'\times G \ar[d]_{\mathrm{pr}}\ar[r]^-{\alpha} & X'\ar[d]^\psi \\ X'\ar[r]_\psi& X,   }
\] 
where $\alpha$ is the action morphisms, we conclude that $\psi^*E\simeq \mathrm{pr}_*\alpha^*E'$. But, after a possible extension of the base field, $X'\times G$  becomes a disjoint sum of copies of $X'$ while $\mathrm{pr}_*\alpha^*E'$ becomes a sum of vector bundles of the shape $g^*E'$, where $g\in\mathrm{Aut}(X')$. Hence, $\psi^*E\in \sC^{\loc}(X')$. (Here we have implicitly used Lemma \ref{27.11.2016--1}.)
For a general $E\in\langle\psi_*\sC^{\rm loc}(X')\rangle_\otimes$, the definition says that $E\in\langle \psi_*(E')\rangle_\otimes$ for some $E'\in\sC^{\rm loc}(X')$. But then, as $\psi^*:\sC^{\rm EF}(X)\to\sC^{\rm EF}(X')$ is an exact tensor functor, $\psi^*E$ belongs to $\langle\psi^*(\psi_*E')\rangle_\otimes$, which is a subcategory of $\sC^{\rm loc}(X')$, as $\pi(X)\to\pi^{\rm loc}(X)$ is a quotient morphism.

Now let $E\in\sC^{\rm EF}(X)$ be such that $\psi^*E$ belongs to $\sC^{\loc}(X')$. Since $\psi$ is faithfully flat,   the ``unit'' 
$E\to\psi_{*}\psi^*(E)$ is a monomorphism, and consequently $E$ belongs to $\langle\psi_*(\psi^*E)\rangle_\ot$. By definition, this says that $E$ lies in $\langle\psi_*\sC^{\loc}(X')\rangle_\ot$. 

(2) The first claim is a consequence of  (1) and the fact that $\psi^*V$ is trivial. Since $\langle\psi_*\sC^{\loc}(X')\rangle_\ot$ is a full subcategory of $\sC^{\rm EF}(X')$ which is stable under subquotients, the standard criterion \cite[2.21, p.139]{dm} guarantees the veracity of the second statement once applied to the inclusion $\langle V\rangle_\ot\subset\langle\psi_*\sC^{\loc}(X')\rangle_\ot$.

(3) This is again a simple application of (1) and the   criterion \cite[2.21, p.139]{dm}.
\end{proof}

At this point, we wish to describe the kernel of  
\[
\pi(X\,;\,\sC^{\rm loc}(X'))\aro G=\pi(X,V ),
\]
which is the statement paralleling \cite[Theorem 3.5]{EHS08}. From Proposition \ref{10.09.2016--1}-(1), we obtain from $\psi^*$ a morphism 
\begin{equation}\label{22.11.2016--1}
\psi_\#:\pi^{\rm loc}(X')\aro \pi(X\,;\,\sC^{\loc} (X')).
\end{equation} (Recall that $X'$ has a $k$-point $x'$ above $x$.) The translation of  \cite[Theorem 3.5]{EHS08} in our setting  is:
\begin{thm}\label{16.09.2016--5}The  morphism $\psi_\#$ of \eqref{22.11.2016--1} is in fact that kernel of $\pi(X\,;\,\sC^{\loc}(X'))\to G$. Put differently, we have an exact sequence 
\[1\aro \pi^{\loc}(X')\aro\pi(X\,;\,\sC^{\loc}(X'))\aro G\aro 1.\]
\end{thm}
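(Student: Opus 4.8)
The plan is to verify the three conditions of the standard Tannakian criterion for exactness of a sequence of affine group schemes (see \cite{EHS08}, and \cite{dm} for the underlying dictionary), applied to the pair of dual tensor functors
\[
\langle V\rangle_\ot \hookrightarrow \langle \psi_*\sC^{\loc}(X')\rangle_\ot \xrightarrow{\ \psi^*\ } \sC^{\loc}(X'),
\]
which correspond respectively to the morphism $p:\pi(X\,;\,\sC^{\loc}(X'))\to G$ of Proposition \ref{10.09.2016--1}-(2) and to $\psi_\#$, whose existence rests on Proposition \ref{10.09.2016--1}-(1). Concretely, I must establish: (a) $p$ is faithfully flat; (b) $\psi_\#$ is a closed embedding; and (c) an object $W$ of $\langle\psi_*\sC^{\loc}(X')\rangle_\ot$ has $\psi^*W$ trivial if and only if $W\in\langle V\rangle_\ot$. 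Condition (c) is precisely the statement that the image of $\psi_\#$ coincides with the kernel of $p$, and therefore carries the real content.

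Condition (a) is exactly Proposition \ref{10.09.2016--1}-(2). For (b), I must show in Tannakian terms that every local bundle on $X'$ is a subquotient of $\psi^*W$ for some $W$ in the source category. Given $E'\in\sC^{\loc}(X')$, the object $\psi_*E'$ lies in $\langle\psi_*\sC^{\loc}(X')\rangle_\ot$ by definition, while the counit $\psi^*\psi_*E'\to E'$ of the adjunction $(\psi^*,\psi_*)$ is an epimorphism (locally it is $b\ot m\mapsto bm$). Hence $E'$ is a quotient — a fortiori a subquotient — of $\psi^*(\psi_*E')$, which settles (b).

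The heart of the argument is (c). One implication is formal: $X'=X_V$ is built so that $\psi^*V$ is trivial (as already used in Proposition \ref{10.09.2016--1}-(2)), and since $\psi^*$ is a tensor functor it carries all of $\langle V\rangle_\ot$ into trivial bundles; in particular $p\circ\psi_\#$ is trivial, so the sequence is a complex. For the converse I would invoke faithfully flat descent along the $G$-torsor $\psi$. Suppose $W$ is a bundle on $X$ with $\psi^*W\cong\sO_{X'}^{\oplus r}$. The canonical descent datum for $\psi^*W$ is an isomorphism over $X'\times_XX'\simeq X'\times G$ between the two trivial bundles $\mathrm{pr}^*\psi^*W$ and $a^*\psi^*W$, with $a$ the action; it is thus an element of $GL_r\bigl(\Gamma(X'\times G,\sO)\bigr)$. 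Since $X'$ is Nori-reduced, $\Gamma(X',\sO_{X'})=k$, so $\Gamma(X'\times G,\sO)=k[G]$, and the cocycle identity turns the datum into a $k[G]$-comodule structure, i.e. a representation $\rho:G\to GL_r$. Descent then identifies $W$ with the contracted product $\mathscr L_{X'}(\rho)$ of \eqref{contracted_product}, so $W\in\langle V\rangle_\ot$.

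I expect this descent computation to be the main obstacle, as it is the single place where the geometry of the torsor $\psi$ — and in particular the hypothesis $\Gamma(X',\sO_{X'})=k$ — is used decisively to convert abstract descent data into an honest $G$-representation; by contrast the formal conditions (a) and (b) are routine. Once (a)--(c) are in hand, the cited exactness criterion delivers the desired exact sequence $1\aro\pi^{\loc}(X')\aro\pi(X\,;\,\sC^{\loc}(X'))\aro G\aro 1$.
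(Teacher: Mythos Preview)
Your outline follows the same route as the paper --- apply the exactness criterion of \cite[Theorem~A.1]{EHS08} to the pair of functors $\langle V\rangle_\ot\hookrightarrow\langle\psi_*\sC^{\loc}(X')\rangle_\ot\xrightarrow{\psi^*}\sC^{\loc}(X')$ --- and your arguments for faithful flatness of $p$, for $\psi_\#$ being a closed embedding, and for the descent step (your condition~(c), the paper's (iii-a)) are essentially identical to the paper's.

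However, there is a genuine gap. The criterion in \cite[Theorem~A.1]{EHS08} requires \emph{three} conditions (iii-a), (iii-b), (iii-c) to conclude exactness in the middle, and you have only addressed two of them. Your~(b) is (iii-c) and your~(c) is (iii-a); you are missing (iii-b), which asks that for every object $E$ of $\langle\psi_*\sC^{\loc}(X')\rangle_\ot$, the maximal trivial subobject of $\psi^*E$ be of the form $\psi^*E_0$ for a subobject $E_0\subset E$ in the category. Your assertion that ``condition~(c) is precisely the statement that the image of $\psi_\#$ coincides with the kernel of $p$'' is therefore not correct: (iii-a) alone does not suffice.

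The paper devotes its longest paragraph to (iii-b). Writing $A=\psi_*\sO_{X'}$ and $H=H^0(X,A\ot E)$, one considers the evaluation morphism $\mathrm{ev}:H\ot_kA^\vee\to E$ and shows, via the section $\delta\in H^0(X,A\ot A^\vee)$ corresponding to $\mathrm{id}_A$, that $\psi^*(\mathrm{ev})$ is surjective on global sections. Since $\psi^*(A^\vee)$ is trivial, $\psi^*(\mathrm{Im}(\mathrm{ev}))$ is trivial, and the surjectivity on $H^0$ forces it to be the maximal trivial subobject of $\psi^*E$. You should incorporate this verification; without it the appeal to \cite[Theorem~A.1]{EHS08} is incomplete.
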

\begin{proof}Firstly, we note that $\psi_\#$ is a closed embedding. So let $E'\in\sC^{\loc}(X')$; by definition $\psi_*(E')$ belongs to $\langle\psi_*\sC^{\loc}(X')\rangle_\ot$ and since the ``co-unit'' $\psi^*(\psi_*E')\to E'$ is an epimorphism,   the criterion \cite[2.21(b), p.139]{dm} immediately proves the statement.  

We then verify that conditions (iii-a) to (iii-c) of Theorem A.1 on p. 396 of \cite{EHS08} are true. In fact, only (iii-a) and (iii-b) need attention, since the argument above already shows that (iii-c) holds.

Let $E\in\sC^{\rm EF}(X)$ become trivial when pulled back to $X'$.  Then, faithfully flat descent shows that $E$ lies in the image of the contracted product $\mathscr L_{X'}$ of \eqref{contracted_product}. Hence, $E$ belongs to $\langle V\rangle_\otimes$. This is condition (iii-a) of \cite[Theorem A1]{EHS08}.

\newcommand{\sA}{\mathcal A}

Let $A$ be the $\sO_X$-coherent algebra $\psi_*(\sO_{X'})$ and let $E$ be an object of $\langle\psi_*\sC^{\loc}(X')\rangle_\otimes$. 
Let $H$ be the space $H^0(X,A\ot_{\sO_X} E)$, $\delta\in H^0(X,A^\ot A^\vee)$ be the global section associated to $\mathrm{id}_A$, and  
\[
\mathrm{ev}:H\ot_kA^\vee\aro E
\]
the evaluation. Since  each $h\in H$ is the image of $h\otimes \delta$ under  \[\mathrm{ev}\ot{\rm id}_A:(H\ot_k A^\vee)\ot_{\sO_X}A\aro E\ot A,\] 
we conclude that 
${\rm ev}\ot{\rm id}_A$ induces a surjection on global sections. This means that $\psi^*({\rm ev})$ induces a surjection on global sections. A fortiori, $\psi^*({\rm Im}({\rm ev}))\to\psi^*E$ induces a surjection on global sections, which implies that  any morphism from $\sO_{X'}$ to $\psi^*E$ factors through $\psi^*({\rm Im}({\rm ev}))$. Now, $\langle\psi_*\sC^{\loc}(X')\rangle_\otimes$ is stable under quotients and $A$ is an object of it; this shows that $\mathrm{Im}({\rm ev})$ lies in $\langle\psi_*\sC^{\loc}(X')\rangle_\otimes$. Then, since $\psi^*(A^\vee)$ is a trivial vector bundle, we can say that $\psi^*({\rm Im}({\rm ev}))$ is equally trivial. In conclusion, $\psi^*(\mathrm{Im}({\rm ev}))$ is the largest trivial subobject of $\psi^*E$, which  is  condition (iii-b) of \cite[Theorem A1]{EHS08}. 

\end{proof}

Now let us order $\sC^{\et}(X)$ in the following way: $W<W'$ if $W\in\langle W'\rangle_\otimes$. Using the direct sum of vector bundles, we see that the resulting partially ordered set is directed, and we obtain a directed system of exact sequences 
\[\xymatrix{&\vdots\ar[d]&\ar[d]\vdots&\ar[d]\vdots  & 
\\
1\ar[r]& \pi^{\loc}(X_{W'})\ar[d]\ar[r]& \pi(X\,;\,\sC^{\loc}(X_{W'}))\ar[d]\ar[r]& \pi(X,W')\ar[r]\ar[d] & 1\\
1\ar[r]& \pi^{\loc}(X_{W})\ar[r]& \pi(X\,;\,\sC^{\loc}(X_W))\ar[r]& \pi(X,W)\ar[r] & 1. }
\]
Taking the limit and using that 
\[
\pi^{\loc}(X_{W'})\aro\pi^{\loc}(X_W)
\]
is always a quotient morphism \cite[Proposition 3.6, p.390]{EHS08}, we arrive at an exact sequence 
\[
1\aro\varprojlim_W \pi^{\loc}(X_{W})\aro \varprojlim_W \pi(X\,;\,\sC^{\loc}(X_W)) \aro \varprojlim_W\pi(X,W)\aro1. 
\]
Note that the rightmost term is a proetale affine group scheme, while the leftmost is a local affine group scheme. In addition, by looking at the categories of representations, we see that the natural morphisms 
\[
\pi(X)\aro \varprojlim_W \pi(X\,;\,\sC^{\loc}(X_W))\quad\text{and}\quad \pi^{\et}(X)\aro \varprojlim_W \pi(X,W) 
\]
are isomorphisms. 
Hence, 
borrowing the notation of \cite[Ch. 6, Exercise 7]{waterhouse}, we conclude that 
\begin{equation}\begin{split}\label{24.11.2016--1}
\pi(X)^o&:=\text{connected component of }\,\pi(X)\\&=\varprojlim\pi^{\loc}(X_W).
\end{split}
\end{equation}
This is precisely  \cite[Theorem 3.5]{EHS08}, as the category $\sD$ appearing on \cite[Definition 3.3]{EHS08} is just the representation category of $\varprojlim_W\pi^{\loc}(X_W)$.

\section{The action of $\pi^\et(X)$ on $\pi(X)^o$}\label{28.11.2016--3}
We work in the setting described in the beginning of section \ref{12.08.2016--1}; in particular,  $k$ is a perfect field of characteristic $p>0$,  $X$ is a proper, reduced and connected algebraic $k$-scheme, and $\psi:X'\to X$ is a torsor under the finite and \'etale group scheme $G$. 

Since the kernel of the morphism 
$\pi(X\,;\,\sC^{\loc}(X'))\to G$
appearing in Theorem \ref{16.09.2016--5} is the local affine group scheme  $\pi^{\loc}(X')$, it is not hard to see, using \cite[6.8, Lemma]{waterhouse}, that 
\[
\pi(X\,;\,\sC^{\loc}(X'))_{\rm red}\stackrel{\sim}{\aro}G.
\]
We then obtain an action of $G$ on $\pi^{\loc}(X')$ by group automorphisms. Our next goal is to understand under which circumstances this action is ``faithful.'' 

\begin{prop}\label{10.09.2016--3}Let $H\subset  G$ be a subgroup scheme acting trivially on $\pi^{\loc}(X')$. Then the natural morphism 
\[
\pi^{\loc}(X')\aro\pi^{\loc}(X'/H)
\]
is an isomorphism. (We use the image of $x'$ on $X'/H$ as base-point for constructing $\pi^{\loc}(X'/H)$.)
\end{prop}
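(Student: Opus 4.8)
The plan is to realize the intermediate covering $Y:=X'/H$ as the base of an $H$-torsor and to run the machinery of Section \ref{12.08.2016--1} for it. Write $N:=\pi^{\loc}(X')$. Since $G$ is finite and \'etale and $k$ is perfect, the closed subgroup scheme $H\subset G$ is again finite and \'etale, so $\psi':X'\to Y$ is a torsor under the finite \'etale group $H$; moreover $Y$ is proper, reduced, connected and Nori-reduced, since $\Gamma(Y,\mathcal O_Y)=\Gamma(X',\mathcal O_{X'})^H=k$. As $\psi'$ is a connected pointed torsor, the constructions of Section \ref{12.08.2016--1} apply verbatim with $(Y,H)$ in place of $(X,G)$. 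First I would apply Theorem \ref{16.09.2016--5} to $\psi'$ to obtain the exact sequence
\[
1\aro N\aro \pi(Y\,;\,\sC^{\loc}(X'))\aro H\aro 1,
\]
and invoke Proposition \ref{10.09.2016--1}(3) (with $Y$ in place of $X$) to the effect that $\pi^{\loc}(Y)$ is the \emph{largest local quotient} of $\pi(Y\,;\,\sC^{\loc}(X'))$.

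Next I would split this sequence. As $N$ is local and $H$ is \'etale, \cite[6.8, Lemma]{waterhouse} identifies the reduced subscheme of $\pi(Y\,;\,\sC^{\loc}(X'))$ with $H$, giving a semidirect product $\pi(Y\,;\,\sC^{\loc}(X'))\simeq N\rtimes H$, exactly as was done for $\pi(X\,;\,\sC^{\loc}(X'))\simeq N\rtimes G$ just before the statement. The crucial point is then to recognise that the conjugation action of $H$ on $N$ arising here is nothing but the restriction to $H$ of the action of $G$ on $N$. For this I would use the covering morphism $Y\to X$ to produce a commutative ladder of the two exact sequences, with the identity on the kernel $N$ and the inclusion $H\hookrightarrow G$ on the cokernels; this exhibits $\pi(Y\,;\,\sC^{\loc}(X'))$ as the preimage of $H$ under $\pi(X\,;\,\sC^{\loc}(X'))\to G$, whence the two conjugation actions agree. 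By hypothesis the $G$-action restricted to $H$ is trivial, and therefore $\pi(Y\,;\,\sC^{\loc}(X'))\simeq N\times H$.

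Finally I would compute the largest local quotient of $N\times H$. Given any local quotient $q:N\times H\to Q$, the composite $H\to N\times H\to Q$ is a morphism from an \'etale group scheme to a local one, hence trivial; thus $q$ factors through the projection $N\times H\to N$. Since $N$ is itself local, this shows that this projection is the largest local quotient. Combining with the previous paragraph, $\pi^{\loc}(Y)=(N\rtimes H)^{\loc}=N=\pi^{\loc}(X')$, and one checks that the natural arrow $\pi^{\loc}(X')\to\pi^{\loc}(Y)$ is precisely the composite of $N\hookrightarrow N\times H$ with this projection, namely the identity.

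The main obstacle, which I would treat with care, is the middle step: verifying that the $H$-action coming from the sequence for $Y$ coincides with the restriction of the $G$-action. Establishing the commutative ladder requires knowing that $\pi(-\,;\,\sC^{\loc}(X'))$ is functorial for the covering $Y\to X$ and that it identifies $\pi(Y\,;\,\sC^{\loc}(X'))$ with the preimage of $H$; this is the genuinely non-formal input, the remaining steps being either quotations of the results of Section \ref{12.08.2016--1} or elementary facts about local versus \'etale group schemes.
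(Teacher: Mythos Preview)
Your proposal is correct and follows essentially the same route as the paper: apply Theorem \ref{16.09.2016--5} to the $H$-torsor $X'\to X'/H$, build the commutative ladder with the sequence for $X$ to identify the $H$-action with the restriction of the $G$-action, and then use triviality of the action together with Proposition \ref{10.09.2016--1}(3) to conclude. The only cosmetic difference is that where you write the extension explicitly as $N\times H$ and compute its largest local quotient, the paper phrases the same step as obtaining a retraction $\pi(X'/H\,;\,\sC^{\loc}(X'))\to\pi^{\loc}(X')$; these are equivalent formulations.
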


\begin{proof}We  adopt the notations implied by the following diagram:   
\[
\xymatrix{X'\ar@/_25pt/[dd]_\psi\ar[d]^{\rho}\\ X'/H\ar[d]^{\sigma}\\ X.}
\]
Note that  $\rho:X'\to X'/H$ is an $H$-torsor so that we can apply Proposition \ref{10.09.2016--1}-(1) to conclude that $\sigma$ takes objects of $\langle\psi_*\sC^{\loc}(X')\rangle_\ot$ to $\langle\rho_*\sC^{\loc}(X')\rangle_\ot$.  
 
There are now two exact sequence in sight (see Theorem \ref{16.09.2016--5}),
\[\tag{$*$}
1\aro\pi^{\loc}(X')\aro\pi(X'/H\,;\,\sC^{\loc}(X'))\aro H\aro 1
\]
and 
\[\tag{$**$}
1\aro\pi^{\loc}(X')\aro\pi(X\,;\,\sC^{\loc}(X'))\aro G\aro 1.
\]
The above observation assures that they are related  by the commutative diagram
\[
\xymatrix{
1\ar[r] & \pi^{\loc}(X')\ar[r]\ar[d]_\sim&\pi(X'/H\,;\,\sC^{\loc}(X'))\ar[r]\ar[d]_{\sigma_\#} & H\ar[d]_{\text{inclusion}}\ar[r]&1
\\
1\ar[r] & \pi^{\loc}(X')\ar[r]&\pi(X\,;\,\sC^{\loc}(X'))\ar[r] & G\ar[r]&1,
}
\]  
where the arrow $\sigma_\#$ is constructed from the functor $\sigma^*:\langle\psi_*\sC^{\loc}(X')\rangle_\ot\to\langle\rho_*\sC^{\loc}(X')\rangle_\ot$. 
The relevance of this relation is that it shows that the action of $H$ on $\pi^{\loc}(X')$ stemming from the sequence  ($*$)  coincides, once all identifications are unraveled,  with the action of $H$ on $\pi^{\loc}(X')$ derived from ($**$). (The reader wishing to run a careful verification should profit from the fact that the action of $G$, respectively of $H$, is really an action of $\pi(X\,;\,\sC^{\loc}(X'))_{\rm red}$, respectively $\pi(X'/H\,;\,\sC^{\loc}(X'))_{\rm red}$.) The assumption on the statement then implies that the action of $H$ on $\pi^{\loc}(X')$ arising from ($*$) is trivial. From this, we derive a retraction 
\[
r:\pi(X'/H\,;\,\sC^{\loc}(X'))\aro \pi^{\loc}(X')
\]
which exhibits $\pi^{\loc}(X')$ as the largest local quotient of $\pi(X'/H\,;\,\sC^{\loc}(X'))$. But by Proposition \ref{10.09.2016--1}(b), the largest local quotient of $\pi(X'/H\,;\,\sC^{\loc}(X'))$ is   $\pi^{\loc}(X'/H)$, and therefore
\[
\pi^{\loc}(X')\simeq\pi^{\loc}(X'/H).
\] 
(It is not hard to see that this morphism is in fact the canonical one.)
\end{proof}

We now want to show that the conclusion  in the statement of Proposition \ref{10.09.2016--3} \emph{cannot} take place if $X$ is a ``hyperbolic curve''. For that, we only need to study the \emph{largest commutative quotient} of the local fundamental group scheme and apply the following result.  

\begin{prop}\label{10.09.2016--4}Let $C$ be a smooth, geometrically connected and projective one dimensional $k$-scheme (a ``curve''), $c$ a $k$-rational point on $C$, $m$ a positive integer, and $\mathrm{Jac}(C)$ the Jacobian of $C$.   Then, 
the largest quotient of $\pi^{\mathrm{loc}}(C,c)$ which is commutative, finite and annihilated by $p^m$ is isomorphic to $\mathrm{Jac}(C)[p^m]^{\loc}$.
\end{prop}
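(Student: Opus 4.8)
The plan is to translate everything into a statement about a Hom-functor and then apply Yoneda's lemma. Write $J=\mathrm{Jac}(C)$, and let $N$ range over finite commutative $k$-group schemes annihilated by $[p^m]$, with $N^D$ its Cartier dual. The category of commutative group schemes annihilated by $[p^m]$ is readily checked to satisfy conditions P1--P3 of Section \ref{16.09.2016--1}, so, together with the category of local group schemes, it falls under the formalism of commuting adjunctions developed there. Consequently the largest commutative, $p^m$-annihilated quotient of $\pi^{\loc}(C,c)$ equals the largest \emph{local} quotient of the largest commutative, $p^m$-annihilated quotient $Q$ of $\pi(C,c)$ itself; since this object will turn out to be finite, the extra hypothesis ``finite'' in the statement is automatic. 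The problem thus divides into identifying $Q$ and then passing to its local part.

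To identify $Q$, I would compute $\mathrm{Hom}(\pi(C,c),N)$ for all finite $N$ as above. By Nori's correspondence this is the group of pointed $N$-torsors on $C$, i.e. $H^1_{\mathrm{fppf}}(C,N)$. The geometric input is the Abel--Jacobi map $\iota\colon C\to J$ with $\iota(c)=0$: pullback along $\iota$ identifies $H^1_{\mathrm{fppf}}(C,N)$ with $H^1_{\mathrm{fppf}}(J,N)$, and on the abelian variety $J$ the Weil--Barsotti autoduality classifies torsors, giving a natural isomorphism
\[
H^1_{\mathrm{fppf}}(C,N)\;\cong\;\mathrm{Hom}(N^D,J).
\]
As $N^D$ is killed by $p^m$, the right-hand side equals $\mathrm{Hom}(N^D,J[p^m])$; Cartier duality rewrites it as $\mathrm{Hom}(J[p^m]^D,N)$, and the Weil pairing together with the principal polarization $J\cong J^\vee$ of the Jacobian yields $J[p^m]^D\cong J^\vee[p^m]\cong J[p^m]$. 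Hence
\[
\mathrm{Hom}(\pi(C,c),N)\;\cong\;\mathrm{Hom}(J[p^m],N)
\]
naturally in the finite object $N$. Since $N\mapsto\mathrm{Hom}(\pi(C,c),N)$ is co-represented by $Q$ and $J[p^m]$ is finite, Yoneda's lemma forces $Q\cong J[p^m]$.

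It then remains to compute the largest local quotient of the finite group scheme $J[p^m]$. As $k$ is perfect, its connected--\'etale sequence splits, $J[p^m]\cong J[p^m]^{\loc}\times\pi_0(J[p^m])$, and the largest local quotient is exactly the connected factor $J[p^m]^{\loc}$. Feeding this back into the reduction of the first paragraph gives the desired isomorphism.

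The main obstacle is the first displayed isomorphism $H^1_{\mathrm{fppf}}(C,N)\cong\mathrm{Hom}(N^D,J)$: it must be produced functorially in $N$ and for \emph{all} finite commutative $N$, including infinitesimal ones such as $\alpha_p$, which is precisely where one needs fppf (rather than \'etale) cohomology and the full force of the autoduality of the Jacobian. A lesser point requiring care is the Yoneda step: one should either restrict attention to the category of finite commutative $p^m$-torsion group schemes after verifying that the pro-object $Q$ is co-represented there by the finite group scheme $J[p^m]$, or argue directly that the inverse system defining $Q$ stabilises.
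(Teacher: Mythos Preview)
Your strategy is sound and reaches the right conclusion, but it is a genuinely different route from the paper's, and it is worth seeing what each buys. The paper never touches fppf cohomology or Cartier duality. It uses the Abel--Jacobi map only to invoke two black boxes: Antei's result \cite[Corollary~3.8]{antei11} that $\pi(C,c)^{\mathrm{ab}}\stackrel{\sim}{\to}\pi(J,e)$, and Nori's computation \cite{nori83} that $\pi(J,e)=TJ:=\varprojlim J[n]$. With these in hand, the commuting--adjunctions formalism of Section~\ref{16.09.2016--1} gives $\bigl[\pi^{\loc}(C,c)\bigr]^{\mathbf K}\simeq\bigl[\pi(J,e)^{\mathbf K}\bigr]^{\loc}$, and a short elementary lemma (Lemma~\ref{13.09.2016--2}) shows that the universal $\mathbf K$-quotient of $TJ$ is $J[p^m]$. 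That is the whole proof.

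Your approach trades these citations for a direct computation of the representable functor $N\mapsto\mathrm{Hom}(\pi(C,c),N)$, which is attractive, but you should be aware that the ``main obstacle'' you flag is essentially the \emph{conjunction} of the two results the paper cites. Specifically: the step ``$\iota^*$ identifies $H^1_{\mathrm{fppf}}(C,N)$ with $H^1_{\mathrm{fppf}}(J,N)$'' is, once unwound, the Albanese property for torsors under \emph{arbitrary} finite commutative $N$ --- exactly Antei's theorem in disguise; and the step ``Weil--Barsotti gives $H^1_{\mathrm{fppf}}(J,N)\cong\mathrm{Hom}(N^D,J^\vee)$'' is not the classical Weil--Barsotti formula (which concerns $\mathbb G_m$) but rather the statement that every pointed $N$-torsor on an abelian variety arises from an isogeny, i.e.\ is primitive --- which is precisely the content of Nori's computation $\pi(J,e)=TJ$. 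So your route is not really more elementary; it repackages the same two inputs. A minor further point: $\mathrm{Hom}(\pi(C,c),N)$ is the set of \emph{pointed} $N$-torsors, which over a non-closed $k$ is the kernel of $H^1_{\mathrm{fppf}}(C,N)\to H^1_{\mathrm{fppf}}(k,N)$ rather than the full $H^1$; this needs to be tracked through the argument.
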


\begin{proof} To ease notation, we write $J$ in place of $\mathrm{Jac}(C)$. Let 
\[
\varphi:C\aro J
\]
be the Abel-Jacobi (or Albanese) morphism sending $c$ to the origin $e$. Then, we arrive at a commutative diagram 
\[
\xymatrix{\pi(C,c)\ar[r]^{\varphi_\#}\ar[d]^q  & \pi(J,e) \\ \ar[ru]_\alpha \pi(C,c)^{\rm ab},& }
\]
in which the arrow $\alpha$ is an isomorphism \cite[Corollary 3.8]{antei11}. Hence, as explained in Section \ref{16.09.2016--1},  
\[\begin{split}
 \left[ \pi(C,c)^{\rm loc}  \right]^{\rm ab}&\simeq \left[ \pi(C,c)^{\rm ab}  \right]^{\rm loc}\\&\simeq\pi(J,e)^{\loc}.\end{split}
\]
Now let $\bb K$ be the full subcategory of the category of affine group schemes defined by those which are commutative, finite and annihilated by $p^m$. Then, using Lemma \ref{13.09.2016--2} below and the notations of Section \ref{16.09.2016--1}, we see that 
\[
\left[\pi(J,e)^{\loc}\right]^{\bb K}=\left[\pi(J,e)^{\bb K}\right]^{\loc}\simeq  J[p^m]^{\loc}. 
\] 
\end{proof}

\begin{thm}\label{26.09.2016--1}If our $X$ is a smooth, geometrically connected and projective curve of genus at least two,  then no non-trivial subgroup scheme of $G$ acts trivially on $\pi^{\loc}(X')$.  
\end{thm}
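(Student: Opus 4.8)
The plan is to argue by contradiction, feeding the hypothesis into Proposition~\ref{10.09.2016--3} and then extracting a numerical invariant through Proposition~\ref{10.09.2016--4}. Suppose, contrary to the assertion, that some non-trivial subgroup scheme $H\subseteq G$ acts trivially on $\pi^{\loc}(X')$. As $G$ is finite and \'etale over the perfect field $k$, the closed subgroup scheme $H$ is again finite and \'etale, and its order $n=\dim_k k[H]$ satisfies $n\geq2$. Proposition~\ref{10.09.2016--3} then tells us that the canonical morphism
\[
\pi^{\loc}(X')\aro\pi^{\loc}(X'/H)
\]
is an isomorphism, and I intend to show that this is impossible.

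First I would record the geometry of the two covers. Since $\psi:X'\to X$ is finite \'etale and $X$ is a smooth projective geometrically connected curve, $X'$ is again such a curve (geometric connectedness follows from $\Gamma(X',\sO_{X'})=k$), and likewise for $X'/H$; the maps $\rho:X'\to X'/H$ and $\sigma:X'/H\to X$ are finite \'etale (the $G$-action on the torsor $X'$ being free, $H$ acts freely). Writing $g'$ and $g''$ for the genera of $X'$ and $X'/H$, Riemann--Hurwitz for \'etale covers gives $2g''-2=\deg(\sigma)\,(2g_X-2)$ and $2g'-2=n\,(2g''-2)$. Because $g_X\geq2$ we obtain $2g''-2>0$, hence $g''\geq2$, and then $n\geq2$ forces the \emph{strict} inequality $g'>g''$. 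This is precisely where the hypothesis genus $\geq2$ is used.

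Next I would apply to the isomorphism above the functor ``largest quotient that is commutative, finite and annihilated by $p^m$'' of Section~\ref{16.09.2016--1}. A functor carries isomorphisms to isomorphisms, so Proposition~\ref{10.09.2016--4} yields, for every $m\geq1$, an isomorphism of finite group schemes
\[
\mathrm{Jac}(X')[p^m]^{\loc}\ \simeq\ \mathrm{Jac}(X'/H)[p^m]^{\loc}.
\]
In particular both sides have the same order. For a curve $C$ of genus $g$ and $p$-rank $f$, the local part of $\mathrm{Jac}(C)[p^m]$ is the $p^m$-torsion of the connected $p$-divisible group $\mathrm{Jac}(C)[p^\infty]^0$, whose height is $2g-f$; thus $|\mathrm{Jac}(C)[p^m]^{\loc}|=p^{m(2g-f)}$. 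The contradiction will therefore follow as soon as I establish the strict inequality $2g'-f'>2g''-f''$, where $f'$ and $f''$ are the $p$-ranks of the two Jacobians.

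That strict inequality is the main obstacle, and I would settle it by a Prym computation. The pullback $\rho^{*}:\mathrm{Jac}(X'/H)\to\mathrm{Jac}(X')$ has finite kernel (since $\rho_{*}\rho^{*}=[n]$), so by Poincar\'e reducibility $\mathrm{Jac}(X')$ is isogenous to $\mathrm{Jac}(X'/H)\times P$ with $\dim P=g'-g''$. The $p$-rank is an isogeny invariant and is additive for products, whence $f'=f''+f(P)$ with $0\leq f(P)\leq\dim P=g'-g''$, and so
\[
(2g'-f')-(2g''-f'')=2(g'-g'')-f(P)\ \geq\ g'-g''\ >\ 0.
\]
Consequently $|\mathrm{Jac}(X')[p^m]^{\loc}|>|\mathrm{Jac}(X'/H)[p^m]^{\loc}|$ for all $m\geq1$, contradicting the displayed isomorphism and completing the proof. (Orders and $p$-ranks are geometric, so one may pass to $\overline k$ freely, and the $k$-rational base points on $X'$ and $X'/H$ needed to invoke Proposition~\ref{10.09.2016--4} are provided by $x'$ and its image.)
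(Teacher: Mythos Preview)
Your argument is correct, but it takes a longer detour than the paper's. Both proofs begin identically: invoke Proposition~\ref{10.09.2016--3} to obtain $\pi^{\loc}(X')\simeq\pi^{\loc}(X'/H)$, then pass through Proposition~\ref{10.09.2016--4} to conclude $\mathrm{Jac}(X')[p^m]^{\loc}\simeq\mathrm{Jac}(X'/H)[p^m]^{\loc}$. At this point the paper simply reads off the \emph{Lie algebra}: since the tangent space at the origin of $A[p]^{\loc}$ coincides with that of $A$ itself, one gets $g'=g''$ directly, and Riemann--Hurwitz with $g_X\ge2$ forces $H$ trivial. You instead compare \emph{orders}, $|A[p^m]^{\loc}|=p^{m(2g-f)}$, which obliges you to control the $p$-ranks; this is why you need the Prym isogeny $\mathrm{Jac}(X')\sim\mathrm{Jac}(X'/H)\times P$ and the bound $f(P)\le\dim P$ to squeeze out the strict inequality $2g'-f'>2g''-f''$. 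That extra machinery is sound (the finiteness of $\ker\rho^*$, Poincar\'e reducibility, and the isogeny-invariance and additivity of the $p$-rank are all standard), but it is avoidable: the tangent-space invariant already separates the two Jacobians without any reference to $p$-ranks or complementary abelian varieties.
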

\begin{proof}Let $H$ be as in the statement of Proposition \ref{10.09.2016--3}. Then, the fact that $\pi^{\rm loc}(X')$ and $\pi^{\loc}(X'/H)$ are isomorphic implies, via Proposition \ref{10.09.2016--4}, that 
\[
\begin{array}{c}\text{Tangent space}\\\text{at the origin}\end{array}\,\mathrm{Jac}(X')\simeq \begin{array}{c}\text{Tangent space}\\\text{at the origin}\end{array}\,\mathrm{Jac}(X'/H).
\]
Therefore, $X'$ and $X'/H$ have the same genus (which is the dimension of the tangent space to the Jacobian \cite[Proposition 2.1]{milne}). The Riemann-Hurwitz formula then shows that $H$ is trivial.   
\end{proof}

We now wish to obtain from Theorem \ref{26.09.2016--1} a statement which is easier to carry. 

Let $\cg$ be an affine group scheme over $k$ and $M$  a vector space  affording a representation of $\cg$. If $M$ is finite dimensional and $\cg$ is algebraic, we say that $M$ is faithful if the obvious morphism $\cg\to\mathbf{GL}(M)$ is a closed embedding or, equivalently, its kernel is trivial \cite[15.3, Theorem]{waterhouse}.  We now translate this last condition in terms of the coaction $\rho:M\to M\otimes k[\cg]$ for future usage. Define a \emph{modified coefficient} of the representation $M$ as any element of the form
\[  (u\otimes\mathrm{id})  \circ \rho(m)-u(m)\cdot1\in k[\cg],\]
where $m\in M$ and $u\in \mathrm{Hom}(M,k)$. (We leave to the reader the simple task of justifying the term ``modified coefficient''.) Then, the kernel of $\cg\to\mathbf{GL}(M)$ is trivial if and only if the modified coefficients generate the augmentation ideal of $k[\cg]$.

Note that  the definition of modified coefficient makes perfect sense for a general representation, finite or infinite dimensional, of a general affine group scheme $\cg$. Hence, the following encompasses the above definition.

\begin{defn}Let $\cg$ be an affine group scheme and $M$ a vector space affording a representation of $\cg$. We say that $M$ is faithful if the modified coefficient of $M$ generate the augmentation ideal of $k[\cg]$. 
\end{defn}
\begin{rmk}The concept  ``faithful representation'' is not really well established in the literature on group schemes. On the other hand,  a representation $M$ of $\cg$ is faithful if and only if no closed non-trivial subgroup scheme of $\cg$ acts trivially.  (This is because, quite generally, the ideal generated by the modified coefficients is a Hopf-ideal.)
\end{rmk}

Let $A$ be a directed set and $\{\cg_\alpha;q_{\beta\alpha}\}$ 
a projective system indexed by $A$. We assume that the transition morphisms, $q_{\beta\alpha}$ are all faithfully flat and let $\cg$ stand for the limit $\varprojlim\cg_\alpha$. 

\begin{lem}\label{lem.faithful}
Let $M$ be a vector space affording a representation of the affine group scheme $\cg$. 
Assume that for each $\alpha\in A$, there exists some $\beta\ge\alpha$, and a faithful representation $M_\beta$ of $\mathcal G_\beta$ which can be $\cg$-equivariantly embedded in $M$.  Then $M$ is a faithful representation of $\cg$.\qed
\end{lem}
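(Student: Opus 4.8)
The plan is to reduce everything to the filtered colimit description of the coordinate ring $k[\cg]$ and to track how modified coefficients behave under the two operations in play: pullback along a projection $\cg\aro\cg_\beta$ and restriction to a $\cg$-equivariant subrepresentation. First I would record the structure of the augmentation ideal. Since each transition morphism $q_{\beta\alpha}$ is faithfully flat, the comorphism $k[\cg_\alpha]\aro k[\cg_\beta]$ is injective, and a filtered colimit of injective maps along injective transition maps is injective; hence each canonical arrow $k[\cg_\alpha]\aro k[\cg]=\varinjlim_\gamma k[\cg_\gamma]$ is injective, and the counits are compatible. Writing $I$ for the augmentation ideal of $k[\cg]$ and $I_\alpha$ for that of $k[\cg_\alpha]$, it follows that $I$ is the filtered union $\bigcup_\alpha I_\alpha$ of the images of the $I_\alpha$; moreover $q_{\beta\alpha}^*$ carries $I_\alpha$ into $I_\beta$, so these images increase with the index. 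Consequently it suffices to show that the ideal $J\subseteq k[\cg]$ generated by the modified coefficients of $M$ contains the image of $I_\beta$ for the cofinal family of indices $\beta$ supplied by the hypothesis.

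Next I would establish two elementary compatibilities. First, if $N$ is a representation of $\cg_\beta$, regarded as a representation of $\cg$ through the projection $\cg\aro\cg_\beta$, then its $\cg$-coaction is obtained from its $\cg_\beta$-coaction by pushing forward along $k[\cg_\beta]\hookrightarrow k[\cg]$; since that map sends $1$ to $1$, the modified coefficients of $N$ as a $\cg$-representation are exactly the images in $k[\cg]$ of its modified coefficients as a $\cg_\beta$-representation. Second, if $\iota\colon N\hookrightarrow M$ is a $\cg$-equivariant embedding, then for $m\in N$ and $u\in\mathrm{Hom}(M,k)$ equivariance gives $\rho_M(\iota(m))=(\iota\ot\mathrm{id})\rho_N(m)$, whence the modified coefficient of $M$ at $(\iota(m),u)$ equals the modified coefficient of $N$ at $(m,u\circ\iota)$; as every functional on $N$ extends to one on $M$, every modified coefficient of $N$ arises in this way. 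Thus the modified coefficients of $N$ sit among those of $M$.

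Finally I would assemble the argument. Fix $\alpha$ and let $\beta\ge\alpha$ together with the faithful $\cg_\beta$-representation $M_\beta\hookrightarrow M$ be as in the hypothesis. By the second compatibility, the modified coefficients of $M_\beta$ (taken as a $\cg$-representation) lie among those of $M$, hence in $J$; by the first, these are the images in $k[\cg]$ of the modified coefficients of $M_\beta$ as a $\cg_\beta$-representation, which, by faithfulness of $M_\beta$, generate $I_\beta$ as an ideal of $k[\cg_\beta]$. Pushing any $k[\cg_\beta]$-linear expression forward to $k[\cg]$ then shows that $J$ contains the image of $I_\beta$, and a fortiori of $I_\alpha$. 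Letting $\alpha$ range over $A$ and invoking the filtered-union description above gives $I=\bigcup_\alpha I_\alpha\subseteq J$, while the reverse inclusion is automatic because every modified coefficient is annihilated by the counit. Hence $J=I$ and $M$ is faithful. The step I expect to be the genuine, if mild, obstacle is precisely the bookkeeping of this last paragraph combined with the first: one must make sure that the faithful-flatness hypothesis really does present $I$ as the increasing union of the images of the $I_\beta$, so that ``generating $I_\beta$ as an ideal of $k[\cg_\beta]$'' upgrades, after push-forward and as $\beta$ grows cofinally, to ``generating $I$ as an ideal of $k[\cg]$''.
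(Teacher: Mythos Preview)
Your proof is correct and follows essentially the same approach as the paper's: both pick an arbitrary element of the augmentation ideal, locate it in some $I_\alpha\subset I_\beta$, use faithfulness of $M_\beta$ to express it via modified coefficients of $M_\beta$, and then observe that these are modified coefficients of $M$ through the equivariant embedding. You have simply made explicit the two compatibility steps (pullback along $\cg\to\cg_\beta$ and passage to a subrepresentation) that the paper compresses into ``it is easy to see,'' and added the harmless reverse inclusion $J\subseteq I$.
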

\begin{proof}Let $f$ be an element of the augmentation ideal of $k[\cg]$. Clearly $f$ belongs to the augmentation ideal of some $k[\cg_\alpha]$. Let $M_\beta$ be as in the statement. It then follows that $f$, which also belongs to the augmentation ideal of $k[\cg_\beta]$, can be expressed as a sum $\sum x_i f_i$, where $f_i$ is a modified coefficient of $M_\beta$. Since $M_\beta$ embeds $\cg$-equivariantly in $M$, it is easy to see that each $f_i$ is also a modified coefficient of $M$.   \end{proof}

Employing this language and the identification \eqref{24.11.2016--1}, we can translate Theorem \ref{26.09.2016--1} as follows.

\begin{thm}\label{28.11.2016--2}Let our $X$ be a smooth, geometrically connected and projective curve of genus at least two. Then, the representation of $\pi^\et(X)$ on $k[\pi(X)^o]$ is faithful. \qed
\end{thm}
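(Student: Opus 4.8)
The plan is to deduce the statement from Theorem \ref{26.09.2016--1} by applying Lemma \ref{lem.faithful} to the identification $\pi(X)^o=\varprojlim_W\pi^{\loc}(X_W)$ of \eqref{24.11.2016--1} and its dual $k[\pi(X)^o]=\varinjlim_W k[\pi^{\loc}(X_W)]$. Writing $\cg=\pi^{\et}(X)=\varprojlim_W\pi(X,W)$, the transition morphisms $\pi(X,W')\to\pi(X,W)$ for $W<W'$ are quotient morphisms, so the projective system $\{\pi(X,W)\}$ satisfies the hypotheses of Lemma \ref{lem.faithful}. The representation of $\cg$ on $M:=k[\pi(X)^o]$ I would then analyze level by level.

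First I would pin down the $\cg$-action at each level $W$. The semi-direct product decomposition $\pi(X)=\pi(X)^o\rtimes\pi^{\et}(X)$ is compatible, under the quotient $\pi(X)\to\pi(X\,;\,\sC^{\loc}(X_W))$, with the decomposition $\pi(X\,;\,\sC^{\loc}(X_W))=\pi^{\loc}(X_W)\rtimes\pi(X,W)$ furnished by Theorem \ref{16.09.2016--5}: the quotient carries $\pi(X)^o$ onto $\pi^{\loc}(X_W)$ and $\pi^{\et}(X)$ onto $\pi(X,W)$, intertwining the respective conjugation actions. Consequently the action of $\cg$ on the subspace $N_W:=k[\pi^{\loc}(X_W)]\subseteq M$ factors through the finite quotient $\pi(X,W)$, via the conjugation action of $\pi(X,W)$ on $\pi^{\loc}(X_W)$. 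Moreover the inclusion $N_W\hookrightarrow M$ is $\cg$-equivariant and injective, the latter because $\pi(X)^o\to\pi^{\loc}(X_W)$ is a quotient morphism.

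Next I would use Theorem \ref{26.09.2016--1} to show that each $N_W$ is a faithful representation of $\pi(X,W)$. Indeed, a subgroup scheme $H\subseteq\pi(X,W)$ acts trivially on $\pi^{\loc}(X_W)$ precisely when it acts trivially on $k[\pi^{\loc}(X_W)]=N_W$; by Theorem \ref{26.09.2016--1} no non-trivial $H$ does so, and by the Remark following the definition of faithful representation this is exactly the assertion that $N_W$ is faithful for $\pi(X,W)$. Since $\pi(X,W)$ is a finite group scheme, its coordinate ring is finite dimensional; writing $N_W$ as the union of its finite dimensional subrepresentations and noting that the corresponding descending chain of kernels in $\pi(X,W)$ stabilizes, I can extract a finite dimensional subrepresentation $M_W\subseteq N_W$ which is already faithful for $\pi(X,W)$.

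Finally, $M_W$ embeds $\cg$-equivariantly in $M$ through $N_W$, so taking $\beta=\alpha=W$ in Lemma \ref{lem.faithful} yields at once that $M=k[\pi(X)^o]$ is a faithful representation of $\cg=\pi^{\et}(X)$. The one point demanding genuine care is the compatibility of the two semi-direct product structures under the quotient map, namely verifying that the $\cg$-action on $N_W$ truly factors through $\pi(X,W)$ by conjugation; this is where the bookkeeping of the identifications in Theorem \ref{16.09.2016--5} and the semi-direct product of the introduction must be carried out honestly. Once that compatibility is secured, the remaining steps are formal.
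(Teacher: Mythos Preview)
Your proposal is correct and follows exactly the route the paper intends: the theorem is presented as a direct translation of Theorem \ref{26.09.2016--1} via the identification \eqref{24.11.2016--1} and Lemma \ref{lem.faithful}, and you have simply unpacked this. One minor point: your step extracting a finite-dimensional $M_W\subseteq N_W$ is unnecessary, since Lemma \ref{lem.faithful} does not require $M_\beta$ to be finite dimensional---the definition of ``faithful'' and the proof of the lemma work for arbitrary representations---so you may take $M_W=N_W$ directly.
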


\section{An exercise on the fundamental group scheme of an abelian variety}\label{12.09.2016--1}
Let   $A$ be an abelian variety over $k$. 
If $m$ and $q$ are positive integers, then multiplication by $q$ on $A[n]$ induces a morphism  $A[qm]\to A[m]$ which is in fact faithfully flat. This allows us to define the affine group scheme $TA:=\varprojlim A[n]$. Paralleling the Lang-Serre theorem \cite[Expos\'e XI,  2.1]{SGA1},  Nori showed in 
 \cite{nori83} that the essentially finite fundamental group scheme 
$\pi(A)$ of $A$ based at the identity is just $TA$. The following is a very simple consequence of this fact. 

\begin{lem}\label{13.09.2016--2}Let $m$ be a positive integer. The obvious arrow $
\pi(A)=TA\to  A[p^m]$   
is universal from $\pi(A)$ to the category of finite, commutative group schemes which are annihilated by $p^m$.
\end{lem}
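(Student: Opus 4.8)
The plan is to recast the universal property as the computation of a largest quotient in the sense of Section \ref{16.09.2016--1}, and then to reduce the whole matter to a single short exact sequence of group schemes. Write $\bb K$ for the category of finite, commutative group schemes annihilated by $p^m$ (as in the proof of Proposition \ref{10.09.2016--4}). To say that the obvious arrow $\pi(A)=TA\to A[p^m]$ is universal from $\pi(A)$ to $\bb K$ is exactly to say, in the language of Lemma \ref{13.09.2016--1} and Definition \ref{23.09.2016--3}, that $A[p^m]$ together with that arrow realises the largest quotient $(TA)^{\bb K}$. I would first record an elementary principle: for any commutative affine group scheme $G$, a quotient morphism $q\colon G\to K$ with $[p^m]_K=0$ automatically factors through the cokernel of $[p^m]_G\colon G\to G$, since $q\circ[p^m]_G=[p^m]_K\circ q=0$; conversely $\mathrm{coker}([p^m]_G)$ is itself annihilated by $p^m$. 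Hence, whenever $\mathrm{coker}([p^m]_G)$ is finite, it is the largest quotient of $G$ lying in $\bb K$. It therefore suffices to prove that the projection $\pi_{p^m}\colon TA\to A[p^m]$ identifies $A[p^m]$ with $\mathrm{coker}([p^m]_{TA})$.

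Next I would reduce to the $p$-primary part. Using the canonical isomorphism $A[n]\cong\prod_{\ell\mid n}A[\ell^{v_\ell(n)}]$, one obtains $TA\cong\prod_\ell T_\ell A$ with $T_\ell A:=\varprojlim_a A[\ell^a]$, and the projection $\pi_{p^m}$ is the composite of the product projection $TA\to T_pA$ with the projection $T_pA\to A[p^m]$. Multiplication by $p^m$ respects this product decomposition; on each factor $T_\ell A$ with $\ell\neq p$ it is an automorphism (as $p$ is invertible modulo $\ell^a$), contributing nothing to the cokernel. Consequently $\mathrm{coker}([p^m]_{TA})=\mathrm{coker}([p^m]_{T_pA})$, and the problem is localised at $T_pA$.

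The heart of the argument is the exactness of
\[
0\aro T_pA\stackrel{[p^m]}{\aro}T_pA\stackrel{\pi_{p^m}}{\aro}A[p^m]\aro0,
\]
which I would establish on $R$-points for every $k$-algebra $R$, exploiting that the transition morphism $A[p^{a+1}]\to A[p^a]$ is multiplication by $p$. A point of $T_pA$ is a compatible system $(x_a)$ with $x_a\in A[p^a](R)$ and $p\,x_{a+1}=x_a$, so that $p^k x_{j+k}=x_j$. For injectivity of $[p^m]$: if $p^m x_a=0$ for all $a$, then $x_a=p^m x_{a+m}=0$. For $\ker\pi_{p^m}=[p^m]T_pA$: given $(x_a)$ with $x_m=0$, set $y_a:=x_{a+m}$; then $p^a y_a=p^a x_{a+m}=x_m=0$ so $y_a\in A[p^a]$, the system $(y_a)$ is compatible since $p\,y_{a+1}=x_{a+m}=y_a$, and $p^m y_a=x_a$, whence $(x_a)=[p^m](y_a)$; the reverse inclusion is immediate. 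Faithful flatness of $\pi_{p^m}$ follows from its being a projection out of a cofiltered limit with faithfully flat transition morphisms. Since $A[p^m]$ is finite, the principle of the first paragraph then gives that the projection is the universal arrow, uniqueness of factorisations coming from $\pi_{p^m}$ being an epimorphism.

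The main obstacle I anticipate is upgrading the compatible-system bookkeeping of the third paragraph into genuine equalities of morphisms of group schemes: I must check that $\ker\pi_{p^m}=[p^m]T_pA$ as \emph{closed subgroup schemes}, not merely pointwise, and that $\pi_{p^m}$ is faithfully flat. I would dispatch the first point by observing that $[p^m]$ and the transition maps are morphisms of affine group schemes, so that the functorial identities above, being natural in $R$, determine the corresponding identities of schemes by Yoneda; and the second by citing the standard fact that projections from a limit of faithfully flat affine group schemes are faithfully flat. Everything else is formal manipulation of the adjunction recalled in Section \ref{16.09.2016--1}.
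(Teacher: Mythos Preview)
Your argument is correct, but it takes a genuinely different route from the paper's. The paper never passes to the limit: given $\alpha:\pi\to H$ with $H\in\bb K$, it simply factors $\alpha$ through some finite stage $A[p^\nu]$, observes that $A[p^{\nu-m}]$ lies in the kernel because $[p^m]:A[p^\nu]\to A[p^{\nu-m}]$ is a quotient morphism and $p^mH=0$, and then descends along the finite-level sequence $0\to A[p^{\nu-m}]\to A[p^\nu]\to A[p^m]\to0$. Uniqueness is immediate since $\pi\to A[p^m]$ is faithfully flat. That is the whole proof.

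Your approach instead identifies the universal arrow with $\mathrm{coker}([p^m]_{TA})$, reduces to the $p$-primary factor via $TA\cong\prod_\ell T_\ell A$, and then establishes the exact sequence $0\to T_pA\stackrel{[p^m]}{\to}T_pA\to A[p^m]\to0$ by a functor-of-points computation. This is more structural and yields a clean statement about $T_pA$ itself, but it is longer and requires you to justify that the pointwise equalities upgrade to scheme-theoretic ones (which they do: the induced map $T_pA\to\ker\pi_{p^m}$ is bijective on all $R$-points, hence an isomorphism by Yoneda). Two small remarks: your cokernel principle works for \emph{any} morphism $q:G\to K$, not just quotient morphisms, and you need the general case for the universal property; also, the category $\bb K$ does not satisfy property~P1 of Section~\ref{16.09.2016--1} (infinite products of finite group schemes need not be finite), so your appeal to Lemma~\ref{13.09.2016--1} and Definition~\ref{23.09.2016--3} is only heuristic---but your actual argument, via the universal property of the cokernel, does not depend on that framework.
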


\begin{proof}To ease notation, let $\pi$ stand for $\pi(A)=TA$. Let $\alpha:\pi\to H$ be an arrow to some $H$ which is finite, commutative and annihilated by $p^m$.  We then have a commutative diagram 
\[
\xymatrix{\pi\ar[r]\ar[rd]_\alpha & A[p^\nu]\ar[d]^\beta\\ &H,}
\]
where $\nu>m$ and the horizontal arrow is the obvious one.
 As $p^m$ annihilates $H$ and $[p^m]:A[p^\nu]\to A[p^{\nu-m}]$ is a quotient   morphism (see \cite[\S8]{milne_AV} for details), we conclude that $A[p^{\nu-m}]\subset\mathrm{Ker}\,\beta$. Using the exact sequence 
\[
0\aro A[p^{\nu-m}]\aro A[p^\nu]\aro A[p^m]\aro0,
\]
we obtain an arrow 
\[
\xymatrix{A[p^\nu]\ar[r]\ar[d] & A[p^m]\ar@{-->}[dl]\\ H.}
\]
This arrow is unique since $\pi\to A[p^m]$ is a quotient morphism. 
\end{proof}

\end{document}